\documentclass[reqno,12pt]{amsart}
\usepackage{amsmath,amssymb,amsthm,tikz,float,enumitem,mathtools}
\numberwithin{equation}{section}
\usepackage[margin=1in]{geometry}
\usepackage[bottom]{footmisc}

\usepackage[linkcolor=blue,colorlinks=true]{hyperref}
\usepackage{hyperref}
\usepackage[capitalise]{cleveref}

\usepackage[normalem]{ulem}

\allowdisplaybreaks

\def\ds{\displaystyle}
\def\p{\partial}

\subjclass[2010]{ 93B05, 93B07, 93B52, 93B60, 35L51}
\keywords{hyperbolic systems, control, observability, homogenization}
\makeatletter
\def\l@subsection{\@tocline{2}{0pt}{2pc}{5pc}{}}
\makeatother

\newcommand\numberthis{\addtocounter{equation}{1}\tag{\theequation}}

\newcommand{\mc}[1]{\mathcal{#1}}
\newcommand{\mb}[1]{\mathbb{#1}}

\newcommand{\rd}{{\rm d}}

\theoremstyle{definition}

	\newtheorem{theorem}{Theorem}
	
	\newtheorem{lemma}[theorem]{Lemma}
	\newtheorem{prop}[theorem]{Proposition}
	\newtheorem{defn}[theorem]{Definition}
	
	\newtheorem{rem}[theorem]{Remark}
	
	\numberwithin{theorem}{section}

\title[Control and homogenization of a hyperbolic system]{Control and homogenization of a coupled hyperbolic system with an oscillating coefficient}

\author{Vaibhav Kumar Jena}
\address{Vaibhav Kumar Jena
	\newline \indent
	{Department of Mathematics \newline \indent
	Indian Institute of Science \newline \indent
	Bengaluru, Karnataka, India.}}
\email{vaibhavkuma1@iisc.ac.in}

\author{Abu Sufian}
\address{Abu Sufian \newline \indent Department of Mathematics \newline \indent Indian Institute of Science Education and Research  \indent \newline \indent Tirupati, Andhra Pradesh, India. \indent }
\email{abusufian@iisertirupati.ac.in}

\begin{document}

\begin{abstract}
We study the control and homogenization properties of a coupled hyperbolic system characterised by a rapidly oscillating coefficient present in the principal term. Due to the high-frequency microstructural variations of the density parameter $\varepsilon$, classical uniform controllability results across all initial data fail under boundary controls alone. To overcome this limitation, we divide the spectrum into low and high-frequency components. Utilizing Ingham-Beurling-type inequalities combined with  spectral gap analysis, we demonstrate the uniform null controllability of the projected low-frequency and high-frequency states. Furthermore, we analyse the structural limits as $\varepsilon \to 0$ and prove that the associated sequence of boundary controls converge weakly to a null control for the underlying homogenized system. Finally, we show that by using an additional interior feedback control strategy, full space controllability can be achieved for all initial data in the energy space.
\end{abstract}

\maketitle

\section{Introduction}

The study of controllability properties for partial differential equations with rapidly oscillating coefficients has attracted considerable attention in recent years. Such problems arise naturally in the modelling of heterogeneous media, composite materials, and multi-scale physical systems, where the underlying parameters exhibit fine-scale spatial variations. A central question is to understand how these oscillations affect the ability to control the system and how the control properties behave in the homogenization limit. In this article, we are interested in studying the control problem for a coupled hyperbolic system with an oscillating density. The density depends on a parameter $\varepsilon$ and we study the behaviour of the system as $\varepsilon \rightarrow 0$.

This work generalises two previous results. First, the work in \cite{CC99} studies an analogous (boundary) control problem for a single one-dimensional wave equation. In particular, partial controllability results are obtained for the projection of the solutions over the subspaces generated by the eigenvectors with wavelengths shorter and longer than the parameter $\varepsilon$. Second, \cite{ped_per} considers a single wave equation and shows that to control all initial data in the space $L^2 (0,1) \times H^{-1} (0,1) $, one must put an additional (internal) feedback control. Our work is a generalisation of both \cite{CC99} and \cite{ped_per} to the case of coupled wave systems. We note that \cite{ped_per} gives explicit formulae for both states and controls and uses it in their analysis. However, for our case this is not possible because the coupling matrix is not known explicitly, which prevents us from obtaining such formulae, either for the state or the control. Nevertheless, we are able to show a positive controllability result.

\subsection{Problem}
Let $n \in \mathbb{N}$, and let $ A \in M_{n \times n} (\mathbb{R})$ be a matrix and $b \in \mb{R}^n$. 
We consider a function $a\in W^{2,\infty}(\mathbb{R})$, that is $1$-periodic, with the following uniform upper and lower bound
\begin{equation} \label{upper_lower_bound_1}
0<a_m\leqslant a(x)\leqslant a_M, \quad \forall x\in \mathbb{R}.
\end{equation}
For $\varepsilon \in (0,1)$, let us define the $\varepsilon$-periodic function $a^\varepsilon$ as $a^\varepsilon(x) := a\left(\frac{x}{\varepsilon}\right)$. For convenience, we will use the notation $\mc{L}^\varepsilon$ to denote the following operator
\[\mc{L}^\varepsilon := \p_x (a^\varepsilon(x)\p_x) I_{n\times n},\]
where $I_{n\times n}$ denotes the $n\times n$ identity matrix.
Furthermore, we write $u=(u_1,u_2,\cdots,u_n)^\text{tr} \in \mathbb{R}^n$, where tr denotes the transpose of the vector. Let $T>0$ and $f \in L^2(0,T)$. Consider the following evolutionary system
\begin{align}\label{general_B_M}
\begin{dcases}
\p_{tt} {u}^\varepsilon-\mc{L}^\varepsilon {u}^\varepsilon + A {u}^\varepsilon = 0, & (t,x)\in (0, T)\times (0,1), \\ 
{u}^\varepsilon (t,0)=0, \quad {u}^\varepsilon(t,1)= b f, & t \in (0,T),\\
{u}^\varepsilon (0,x)={u}^0(x), \quad \p_t {u}^\varepsilon (0,x) = {u}
^1(x), & x \in (0,1).
\end{dcases}
\end{align}
Here, $A$ is the coupling matrix and $b$ is the control vector.
We are interested in studying the controllability of the above system. We now provide the definition of controllability.
\begin{defn}
Given any $T>0$ and $({u}^0,{u}^1)\in (L^2(0,1))^n \times (H^{-1}(0,1))^n$, system \eqref{general_B_M} is said to be null controllable in time $T,$ if there exists a control function $f\in L^2(0,T)$, such that the solution $u^\varepsilon$ satisfies
\[u^\varepsilon(T,x)=0 \quad \text{and} \quad \p_t u^\varepsilon(T,x)=0, \quad \text{ for } x \in (0,1).\]
In this case, the function $f$ is said to be a null control for system \eqref{general_B_M}.
\end{defn}

Before presenting the main results, we mention the following hypotheses on the coupling matrix $A$ and the control vector $b$, which will be assumed throughout the article:
\begin{itemize}

\item[(H1)] The matrix pair $(A,b)$ satisfies the Kalman condition, i.e., the Kalman matrix $[A|b] = (b|Ab|A^2 b|\ldots|A^{n-1}b)$ has rank $n$. 

\item[(H2)] The matrix $A$ is symmetric and has $n$-distinct eigenvalues.

\item[(H3)] Let $\{ \lambda_k \}_{k=1}^\infty$ be the eigenvalues of the operator $-\mc{L}^\varepsilon$ on $(0,1)$ with homogeneous Dirichlet boundary conditions. Let $\theta_1, \ldots, \theta_n$ be the eigenvalues of the matrix $A$. Then the following property holds:
\[\max |\theta_i - \theta_j| < \min |\lambda_k - \lambda_l|, \quad \forall i\neq j, i,j = 1,2,\ldots,n \text{ and } k \neq l, k,l \in \mb{N}.\]


\end{itemize}

\begin{rem}
Assumption (H1) is a standard \emph{necessary} condition for controllability of finite-dimensional systems governed by the Kalman condition; see \cite{avd_ter}, for instance. In the context of distributed parameter systems, this condition ensures that the coupling matrix transmits the control action to all modes. Note that, putting $m$-controls, where $m < n$, is desirable, especially in applications. However, even in the absence of oscillation, there is no result that solves the control problem in the general case of $m$-controls; the relevant work \cite{avd_ter} considers $n$-controls.
\end{rem}

\begin{rem}
Assumption (H2) ensures that $A$ is self-adjoint and has a complete set of eigenvectors, which is essential for the spectral decomposition of the coupled system. The distinctness of eigenvalues allows for a clean separation of the spectral components and prevents degeneracies that would complicate the control construction.
\end{rem}

\begin{rem}
Assumption (H3) is technical but important. It ensures that the eigenvalues of the coupled operator are sufficiently separated so that we can put them in an increasing order and apply the Ingham-Beurling inequality; see \cite{beurling}. Without this assumption, the spectrum could intersect, making the spectral gap arguments invalid. An analogue of this assumption for the single wave equation can be found in \cite[Theorem 1.2]{avd_ter}, which allows them to use Ingham's inequality. Roughly speaking, the Ingham-Beurling inequality is a generalisation of Ingham's inequality.
\end{rem}

It is well known in the literature \cite{dol_rus} that the controllability of \eqref{general_B_M} is closely related to studying the corresponding adjoint system given by 
\begin{align} \label{eq_adjt}
\begin{dcases}
 \p_{tt} w^\varepsilon- \mc{L}^\varepsilon w^\varepsilon + A w^\varepsilon=0, & (t,x)\in (0, T)\times (0,1), \\ 
w^\varepsilon(t,0)=w^\varepsilon
(t,1)=0, & t \in (0,T),\\
w^\varepsilon(0,x) = w^0(x), \quad \p_t w^\varepsilon(0,x) = w^1(x), & x \in (0,1).
\end{dcases}
\end{align}
Related to the above system, we consider the following eigenvalue problem
\begin{equation} \label{eq_evp}
\begin{split}
-& \mc{L}^\varepsilon \Phi^\varepsilon + A \Phi^\varepsilon = \mu^\varepsilon \Phi^\varepsilon, \quad \text{in } (0,1),\\
& \Phi^\varepsilon(0) = \Phi^\varepsilon(1) = 0.
\end{split}
\end{equation}
It can be shown using similar ideas as in \cite[Section 3.1]{JS} that $\mu^\varepsilon := \mu^\varepsilon_{k,j} = \lambda_k^\varepsilon + \theta_j$, where $\lambda_k^\varepsilon$ is an eigenvalue for the problem
\begin{equation} \label{eq_evalue_L}
\begin{split}
- & \p_x(a^\varepsilon(x) \varphi^\varepsilon_k) = \lambda_k^\varepsilon \varphi_k^\varepsilon, \qquad \text{in } (0,1),\\
& \varphi(0) = \varphi(1) = 0, 
\end{split}
\end{equation}
and $\theta_j$ are the eigenvalues of $A$
\begin{equation} \label{eq_evalue_A}
A v_j = \theta_j v_j, \quad 0 \neq v_j \in \mb{R}^n.
\end{equation}
Notationally, we use $\Phi^\varepsilon_{k,j}$ to denote the eigenvector corresponding to $\mu^\varepsilon_{k,j}$, and furthermore, we have
\begin{equation} \label{eq_evec_1}
\Phi^\varepsilon_{k,j}=\varphi_k^\varepsilon\cdot v_j.
\end{equation} 
We refer the reader to \cite[Section 3.1]{JS} for the details. Furthermore, using \cite[Lemma 2.1]{avd_ter}, we can also choose the set of eigenvectors $\{v_j\}$ such that $\langle b,v_j \rangle = 1$, for all $j=1,2,\ldots,n$. This is a crucial property that is used in the spectral analysis; see the proof of Proposition \ref{lowfreobs}. Note that assumption (H3) is needed to ensure that the eigenspaces corresponding to $\mu_{k,i}$ and $\mu_{l,j}$ do not overlap for $(k,i)\neq (l,j)$ for all $k,l\in \mathbb{N},$ and $i,j \in \{1,2,\ldots,n\}$.

To present the main results, we describe the required frequency spaces. For $D>0$ and $\varepsilon > 0$, we define the low and high frequency spaces, respectively, as follows
\begin{equation*}
\begin{split}
L_{[D/\varepsilon]} := \text{span} \{ \Phi^\varepsilon_{k,l}: k \leq [D/\varepsilon], l = 1,2,\ldots,n \},\\
H^{[D/\varepsilon]} := \text{span} \{ \Phi^\varepsilon_{k,l}: k > [D/\varepsilon], l = 1,2,\ldots,n \}.
\end{split}
\end{equation*}
We denote by $\Pi_{L_{[D/\varepsilon]}}$ and $\Pi_{H^{[D/\varepsilon]}}$ the corresponding orthogonal projections from $(L^2(0,1))^n$ onto $L_{[D/\varepsilon]}$. Then, our main results addressing the controllability and homogenisation of system \eqref{general_B_M} are given below. For brevity, we present the results concerning the low frequency part here. The high frequency result is given in Theorem \ref{thm_highfreq}.

\begin{theorem}[Controllability] \label{thm_control_m}
Assume that (H1), (H2), and (H3) are satisfied. Also, let $T>2\sqrt{\int_0^1a(x)}$ and $[D/\varepsilon]\sim 1/\varepsilon$.
Let $({u}^0,{u}^1) \in (L^2(0,1))^n\times (H^{-1}(0,1)$$)^n$. Then for every $\varepsilon\in (0,1)$, there exists a control function $f^\varepsilon\in L^2(0,T)$  such that solution of system \eqref{general_B_M} satisfies
\begin{equation} \label{eq_proj_cont_low}
\left( \Pi_{L_{[D/\varepsilon]}} u^\varepsilon, \Pi_{L_{[D/\varepsilon]}} \p_t u^\varepsilon \right) = (0,0).
\end{equation}
Moreover, there exists a constant $C := C(T) >0$, independent of $\varepsilon$, such that
\begin{align*}
\| f^\varepsilon \|_{L^2(0,T)} \leqslant C(T) \|( \Pi_{L_{[D/\varepsilon]}}{u}^0,\Pi_{L_{[D/\varepsilon]}}{u}^1) \|_{(L^2(0,1))^n\times (H^{-1}(0,1))^n},\\
\left\|\left(\Pi_{L_{[D/\varepsilon]}} u^\varepsilon, \Pi_{L_{[D/\varepsilon]}} \p_t u^\varepsilon \right) \right\|_{L^\infty(0,T;(L^2(0,1))^n \times (H^{-1}(0,1))^n )}\leq C(T)\|f^\varepsilon \|_{L^2(0,T)}.
\end{align*}
\end{theorem}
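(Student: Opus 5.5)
By HUM duality the projected null controllability is equivalent to a uniform observability inequality for the adjoint system \eqref{eq_adjt} with data in the low-frequency space $L_{[D/\varepsilon]}$. The observation is the normal derivative paired with $b$. The plan has three steps.

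\textbf{Step 1 (reduction to observability).} For adjoint data $(w^0,w^1)\in L_{[D/\varepsilon]}$, taken in $(H_0^1)^n\times(L^2)^n$, we aim for
\begin{equation*}
\|(w^0,w^1)\|^2_{(H^1_0)^n\times (L^2)^n}\le C(T)\int_0^T \bigl|\langle b, a^\varepsilon(1)\p_x w^\varepsilon(t,1)\rangle\bigr|^2\,\rd t,
\end{equation*}
with $C(T)$ independent of $\varepsilon$. Given this, minimize the usual quadratic HUM functional over $L_{[D/\varepsilon]}$. Its minimizer gives $f^\varepsilon=\langle b,a^\varepsilon(1)\p_x\hat w^\varepsilon(\cdot,1)\rangle$ with the bound on $\|f^\varepsilon\|_{L^2}$. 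The identity obtained by multiplying \eqref{general_B_M} by $w^\varepsilon$ and integrating by parts then gives \eqref{eq_proj_cont_low}. This works because $\Pi_{L_{[D/\varepsilon]}}$ commutes with the dynamics: the $\Phi^\varepsilon_{k,j}$ diagonalize the operator, and $A$ is symmetric. The second estimate, the $L^\infty$ bound on the projected state in terms of $\|f^\varepsilon\|$, is a direct admissibility (hidden regularity) statement. I would obtain it by expanding $\Pi_L u^\varepsilon$ in the basis $\Phi^\varepsilon_{k,j}$, writing each coefficient via Duhamel with forcing $f(t)\,a^\varepsilon(1)\p_x\varphi_k^\varepsilon(1)\langle b,v_j\rangle$, and summing with Ingham-type upper bounds. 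These upper bounds need only a uniform gap.

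\textbf{Step 2 (spectral expansion).} Write $w^\varepsilon=\sum_{k\le[D/\varepsilon]}\sum_j (\alpha_{kj}e^{i\sqrt{\mu_{kj}}t}+\beta_{kj}e^{-i\sqrt{\mu_{kj}}t})\varphi_k^\varepsilon v_j$. Using $\langle b,v_j\rangle=1$, the observation is $\sum_{k,j}(\cdots)\,a^\varepsilon(1)(\varphi_k^\varepsilon)'(1)$, a nonharmonic Fourier series in the frequencies $\pm\sqrt{\lambda_k^\varepsilon+\theta_j}$. Two uniform facts from the single-equation analysis of \cite{CC99} are needed for $k\le D/\varepsilon$ with $D$ small:
\begin{itemize}
\item a uniform gap $\sqrt{\lambda^\varepsilon_{k+1}}-\sqrt{\lambda^\varepsilon_k}\ge \pi/\sqrt{\int_0^1 a^{-1}}-\delta$ (or the analogous constant; asymptotically the gap is $\pi/\sqrt{\text{mean}}$, matching $T>2\sqrt{\int a}$);
\item a uniform lower bound $|a^\varepsilon(1)(\varphi^\varepsilon_k)'(1)|^2\ge c\lambda_k^\varepsilon$ for normalized eigenfunctions.
\end{itemize}
These come from WKB/Bloch-wave asymptotics, valid exactly in the range $k\lesssim D/\varepsilon$.

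\textbf{Step 3 (Ingham--Beurling with clusters), the main obstacle.} The $n$ frequencies $\sqrt{\lambda_k+\theta_j}$, $j=1,\dots,n$, form a cluster whose internal spacing is roughly $(\theta_i-\theta_j)/(2\sqrt{\lambda_k})$, which tends to $0$. Classical Ingham therefore fails, and one must use Beurling's upper density version or a generalized Ingham inequality for clustered sequences with divided differences. (H3) ensures the sequence can be ordered and that clusters for different $k$ are separated by the uniform gap. The Beurling density is $1/\text{gap}$, giving the condition $T>2\sqrt{\int a}$. Within each cluster, the inequality controls divided differences, and passing back to the individual coefficients costs factors of the inverse cluster spacing, roughly $\sqrt{\lambda_k}$. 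I would absorb this weight as in \cite{avd_ter}, using (H1) and (H2) through the normalization $\langle b,v_j\rangle=1$ and the distinctness of the $\theta_j$. I expect this weight to shift the natural observability space by one derivative relative to the energy space. I would then match norms so that the final estimate is stated in $(L^2)^n\times(H^{-1})^n$ for the controlled problem, i.e.\ $(H^1_0)^n\times(L^2)^n$ for the adjoint problem after this loss. Checking that all constants stay $\varepsilon$-uniform through this divided-difference step, given only the asymptotic gap for $k\le D/\varepsilon$, is where I expect the real work to lie.
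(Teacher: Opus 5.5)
Steps 1 and 2 are sound. The gap is in Step 3, where you propose to absorb the divided-difference weight and still state observability in $(H^1_0)^n\times(L^2)^n$ with an $\varepsilon$-independent constant. That weight is a genuine loss, and because the low-frequency range reaches $k\sim D/\varepsilon$, it destroys uniformity. Take $n\ge 2$, $k=[D/\varepsilon]$, $w^1=0$ and $w^0=\varphi^\varepsilon_k\,(v_1-v_2)\in L_{[D/\varepsilon]}$, with $\langle b,v_j\rangle=1$. Then
\[
\langle b,\partial_x w^\varepsilon(t,1)\rangle=(\varphi^\varepsilon_k)'(1)\Bigl(\cos\bigl(\sqrt{\mu^\varepsilon_{k,1}}\,t\bigr)-\cos\bigl(\sqrt{\mu^\varepsilon_{k,2}}\,t\bigr)\Bigr).
\]
We have $|(\varphi^\varepsilon_k)'(1)|\le C\sqrt{\lambda^\varepsilon_k}$ and $|\sqrt{\mu^\varepsilon_{k,1}}-\sqrt{\mu^\varepsilon_{k,2}}|\le C|\theta_1-\theta_2|/\sqrt{\lambda^\varepsilon_k}$. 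Hence the observation is bounded by $Ct$, and its $L^2(0,T)$-norm is $O(1)$. On the other hand, $\|w^0\|^2_{(H^1_0)^n}\ge c\,\lambda^\varepsilon_k\ge c\,\varepsilon^{-2}$. So the uniform estimate you are aiming for is false. By the HUM duality you set up in Step 1, the minimal cost of controlling the low-frequency projection of some unit datum in $(L^2)^n\times(H^{-1})^n$ is at least of order $\varepsilon^{-1}$. For general $n$, taking the coefficients of the top-order divided difference over a cluster gives order $\varepsilon^{-(n-1)}$. Hypotheses (H1), (H2) and the normalization $\langle b,v_j\rangle=1$ cannot rescue this: the normalization is exactly what makes $v_1-v_2$ invisible at leading order.

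Two further claims in Step 3 are also off. First, the loss is not one derivative but up to $n-1$, since a cluster of $n$ frequencies at mutual distance of order $(\lambda^\varepsilon_k)^{-1/2}$ involves divided differences up to order $n-1$. Second, each cluster carries $n$ frequencies, so the upper density of $\{\pm\sqrt{\mu^\varepsilon_{k,j}}\}$ is $n$ divided by the inter-cluster gap, not one divided by it. The weakened-gap Ingham--Beurling theorem therefore applies only for $T$ roughly $n$ times the single-equation time. (Your gap $\pi/\sqrt{\int_0^1 a^{-1}}$, which is correct for $-\partial_x(a^\varepsilon\partial_x)$, gives the single-equation time $2\sqrt{\int_0^1 a^{-1}}$, not $2\sqrt{\int_0^1 a}$.) What your method can honestly deliver is one of two things, in a time about $n$ times longer: an $\varepsilon$-uniform estimate in a norm weaker by up to $n-1$ derivatives on each cluster (equivalently, uniform control of smoother projected data), or an energy-space estimate with constant growing like $\varepsilon^{-(n-1)}$.

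The paper does not confront this obstacle at all. Proposition \ref{lowfreobs} applies Lemma \ref{lemma_IB}, which asserts a two-sided bound on the individual coefficients, for every $T>0$, under only the $M$-step gap of Lemma \ref{lemma_beurling}. That is a misstatement of the weakened-gap theorem, which controls divided differences and requires $T$ large. The example above contradicts Proposition \ref{lowfreobs} as written. Your diagnosis of the difficulty is sharper than the paper's, but for $n\ge 2$ neither route can succeed until the statement is weakened, in its norms and time or in the $\varepsilon$-dependence of its constant.
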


\begin{theorem}[Homogenization] \label{thm_homogen_m}
Assume the hypothesis of Theorem \ref{thm_control_m}. 
Then, there exists a sequence of control functions $\{f^\varepsilon\}_{\varepsilon}\in L^2(0,1)$ for the system \eqref{general_B_M}, such that 
$$a^\varepsilon(1)f^\varepsilon\rightharpoonup a^0f_0 \text{ weakly } L^2(0,T), $$
where $f_0$ is a null control of the homogenized system of \eqref{general_B_M}, given by
\begin{align*} 
\begin{dcases}
\p_{tt} {u}-\mc{L}^0 {u} +A {u}= 0, & (t,x)\in (0, T)\times (0,1), \\ 
{u}(t,0)=0, \;{u}(t,1) = b f_0, & t \in (0,T),\\
{u} (0,x)={u}^0,\, {u}^1 (0,x)={u}^1 , & x \in (0,1),\\
{u}(T,x)=0, \quad {u}_t(T,x)=0, & x \in (0,1),
\end{dcases}
\end{align*}
and where the homogenized operator is $\mathcal{L}^0 := \left(a^0\partial_{xx} \right) I_{n\times n}= \left(\left(\ds \int_0^1\frac{1}{a(s)} \rd s\right)^{-1} \p_{xx}\right)I_{n\times n}$.
\end{theorem}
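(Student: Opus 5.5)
We prove Theorem \ref{thm_homogen_m} by the standard HUM-plus-compactness route of \cite{CC99}, adapted to the coupled spectrum $\mu^\varepsilon_{k,j}=\lambda^\varepsilon_k+\theta_j$.

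\textbf{Step 1: the HUM characterization of $f^\varepsilon$.} Take for $f^\varepsilon$ the HUM control given by Theorem \ref{thm_control_m}. Then $f^\varepsilon=\langle b, a^\varepsilon(1)\p_x w^\varepsilon(t,1)\rangle$, where $w^\varepsilon$ solves the adjoint system \eqref{eq_adjt} with data $(w^{0,\varepsilon},w^{1,\varepsilon})\in L_{[D/\varepsilon]}\times L_{[D/\varepsilon]}$. These data are the minimizers of the functional
\[
J^\varepsilon(w^0,w^1)=\frac12\int_0^T\big|\langle b,a^\varepsilon(1)\p_x w^\varepsilon(t,1)\rangle\big|^2\,\rd t+\langle (u^0,u^1),(w^0,w^1)\rangle ,
\]
restricted to the low-frequency space, with the duality pairing taken between $(L^2)^n\times(H^{-1})^n$ and $(H^1_0)^n\times (L^2)^n$.

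\textbf{Step 2: uniform bounds.} The uniform observability inequality on $L_{[D/\varepsilon]}$ (the one underlying Proposition \ref{lowfreobs} and Theorem \ref{thm_control_m}) makes $J^\varepsilon$ coercive uniformly in $\varepsilon$. Hence $(w^{0,\varepsilon},w^{1,\varepsilon})$ is bounded in $(H^1_0)^n\times(L^2)^n$, and $f^\varepsilon$ is bounded in $L^2(0,T)$. Since $a$ is bounded, $a^\varepsilon(1)f^\varepsilon$ is bounded as well. Along a subsequence we then get
\[
(w^{0,\varepsilon},w^{1,\varepsilon})\rightharpoonup(w^0,w^1), \qquad a^\varepsilon(1)f^\varepsilon\rightharpoonup g \quad\text{weakly in } L^2(0,T).
\]

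\textbf{Step 3: passing to the limit in the adjoint system.} Classical homogenization of the wave operator, applied componentwise after diagonalizing $A$ by its eigenvectors $v_j$, gives $w^\varepsilon\rightharpoonup w$ weakly-$*$ in $L^\infty(0,T;(H^1_0)^n)$. Here $w$ solves the homogenized adjoint system with operator $\mc L^0$. The flux $\sigma^\varepsilon=a^\varepsilon\p_x w^\varepsilon$ converges to $a^0\p_x w$, strongly in $L^2$ in $x$, because $\p_x\sigma^\varepsilon$ is bounded via the equation. Its boundary trace at $x=1$ is controlled by the one-dimensional multiplier identity (multiply by $x\,\p_x w^\varepsilon$ or $q(x)\sigma^\varepsilon$), which gives weak convergence of $\sigma^\varepsilon(\cdot,1)$ in $L^2(0,T)$ to $a^0\p_x w(\cdot,1)$. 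Thus $g=a^0\langle b,a^0\p_xw(t,1)\rangle$, which defines $f_0$.

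\textbf{Step 4: $f_0$ is a null control, and the main obstacle.} Use the control identity
\[
\int_0^T f^\varepsilon\,\langle b,a^\varepsilon(1)\p_x\psi^\varepsilon(t,1)\rangle\,\rd t=-\langle(u^0,u^1),(\psi^0,\psi^1)\rangle ,
\]
valid for every adjoint solution $\psi^\varepsilon$ with data in $L_{[D/\varepsilon]}$. Fix smooth data $(\psi^0,\psi^1)$. Its projections onto $L_{[D/\varepsilon]}$ converge strongly to it as $\varepsilon\to 0$, since $[D/\varepsilon]\to\infty$ and the eigenfunctions $\varphi^\varepsilon_k$ converge to the homogenized ones for $k\ll 1/\varepsilon$. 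Passing to the limit then requires the \emph{strong} convergence of $a^\varepsilon(1)\p_x\psi^\varepsilon(\cdot,1)$, which follows from strong convergence of the data and uniform continuous dependence of the trace.

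This step is the main obstacle. It needs the control of the low-frequency part to be uniform, and it needs the trace to converge strongly even though $a^\varepsilon(1)$ oscillates. The factor $a^\varepsilon(1)$ is exactly why the statement is phrased with $a^\varepsilon(1)f^\varepsilon$: after the $a^\varepsilon(1)$ factors are redistributed, the strongly converging quantity is the normal flux. With the strong flux convergence in hand, the weak convergence of $a^\varepsilon(1)f^\varepsilon$ gives the limit identity for all smooth data, and by density for all data. That identity is precisely the duality characterization of $f_0$ being a null control for the homogenized system.

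Finally, $J^\varepsilon\to J^0$ in the $\Gamma$-sense, and $J^0$ has a unique minimizer. So the whole sequence converges, not just a subsequence.
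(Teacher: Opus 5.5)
Your outline follows the paper's route: HUM minimisers on $L_{[D/\varepsilon]}$, uniform coercivity from Proposition~\ref{lowfreobs}, weak compactness, passage to the limit in the duality identity as in \cite{CC99}, and uniqueness of the homogenized HUM minimiser. You write it out in more detail than the paper, but there is a genuine gap exactly at the factor $a^\varepsilon(1)$, which you rightly flag as the crux.

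The scalar $a^\varepsilon(1)=a(1/\varepsilon)$ does not converge to $a^0$. As $\varepsilon\downarrow0$ it sweeps the whole range of $a$ infinitely often, while $a^0$ is the harmonic mean of $a$. With your normalisation $f^\varepsilon=\langle b,a^\varepsilon(1)\p_xw^\varepsilon(\cdot,1)\rangle=\langle b,\sigma^\varepsilon(\cdot,1)\rangle$, Step 3 gives, along a subsequence with $a(1/\varepsilon)\to\alpha$, only $a^\varepsilon(1)f^\varepsilon\rightharpoonup\alpha\langle b,a^0\p_xw(\cdot,1)\rangle$. Writing $a^0$ in place of $\alpha$ is unjustified.

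No ``redistribution'' in Step 4 repairs this. The duality identity contains exactly one factor $a^\varepsilon(1)$, and it belongs to the flux $\sigma^\varepsilon_\psi(\cdot,1)=a^\varepsilon(1)\p_x\psi^\varepsilon(\cdot,1)$, which is the quantity that converges strongly. So what must converge weakly is $f^\varepsilon$ itself. If you pair $a^\varepsilon(1)f^\varepsilon$ instead, you are left with $\p_x\psi^\varepsilon(\cdot,1)=\sigma^\varepsilon_\psi(\cdot,1)/a(1/\varepsilon)$, which has no limit.

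Consequently, the limit identity says that $g/\alpha$ is a null control of the homogenized system. Then $g/a^0$ is a null control only if $\alpha=a^0$: for nonzero data, $\tilde f$ and $c\tilde f$ with $c\neq1$ cannot both be null controls, because the free evolution is invertible. For the same reason, your final claim that the whole family $a^\varepsilon(1)f^\varepsilon$ converges is false whenever $a$ is nonconstant and $f_0\neq0$.

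What Steps 1--4 really establish, once the technical points below are fixed, is $f^\varepsilon\rightharpoonup f_0=\langle b,a^0\p_xw(\cdot,1)\rangle$ along the full family, with $f_0$ the HUM control of the homogenized system. The convergence $a^\varepsilon(1)f^\varepsilon\rightharpoonup a^0f_0$ can hold only along sequences $\varepsilon_m\to0$ with $a(1/\varepsilon_m)\to a^0$. One example is $\varepsilon_m=1/(m+y_0)$ with $a(y_0)=a^0$; such a $y_0$ exists because $a$ is continuous and $a^0\in[\min a,\max a]$. You must pick such a sequence explicitly. That is compatible with the ``there exists a sequence'' phrasing, but not with a whole-family claim. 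The paper's proof does not resolve this either: it \emph{defines} $f_0$ through $a^\varepsilon(1)f^\varepsilon\rightharpoonup a^0f_0$ and defers the identification to ``standard homogenization techniques''.

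There are also two smaller points. First, the multipliers $x\p_xw^\varepsilon$ or $q\sigma^\varepsilon$ produce terms containing $(a^\varepsilon)'=O(\varepsilon^{-1})$, so they do not give a uniform trace bound. Use the upper inequality of Proposition~\ref{lowfreobs}, which holds on $L_{[D/\varepsilon]}$, instead. Second, projections of a fixed smooth datum onto $L_{[D/\varepsilon]}$ converge only weakly in $(H^1_0)^n$, since they carry the cell corrector $\varepsilon\chi(x/\varepsilon)\p_x\psi^0$. In Step 4, take as test data finite combinations of $\varphi_k^\varepsilon v_j$ with $k\le K$ fixed. Their fluxes $a^\varepsilon(\varphi_k^\varepsilon)'$ are bounded in $H^1(0,1)$ and hence converge uniformly, and you can then conclude by density.
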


The main result concerning the controllability of all initial data in $L^2 (0,1) \times H^{-1} (0,1)$, using an additional feedback type control, is provided in Section \ref{sec_feedback}, as it is too technical to present in the introduction; see Theorem \ref{thm_full_space} and Theorem \ref{thm_full_space_hom}.

\subsection{Discussion}

We recall some background literature relevant to the problem studied in this work. Over the past several decades, the homogenization of control problems has attracted significant attention within the research community. One of the pioneering contributions in this area appears in \cite{zuazua94}, where the homogenization of an approximate controllability problem for a linear parabolic equation with rapidly oscillating diffusion coefficients was investigated. This line of research was later extended to settings involving perforated domains in \cite{don-nabil01}. Since then, numerous studies have addressed the homogenization of approximate controllability for evolutionary equations with oscillatory coefficients in more complex geometries, including domains with rough interfaces, multi-component structures, and perforations. For further developments and related results, we refer the reader to \cite{jose15, jose21, Fella19, PD99} and the references cited therein.

As far as controllability of waves without any oscillation is concerned, there is a huge amount of literature on it. For instance, see \cite{fur_iman, trig, xu, lucie, vkj1, vkj2, jsh}. However, the key difficulty in the current problem is that in the presence of an oscillation all the control results must be uniform with respect to the parameter $\varepsilon$, so that we can take the limit as $\varepsilon \rightarrow 0$. Hence, one must ensure that the control functions are estimated by the correct powers of $\varepsilon$. A similar problem in the parabolic setting was studied by the authors in \cite{JS}. There, the main challenge was likewise to determine the appropriate powers of $\varepsilon$ to ensure that the system remains well behaved as $\varepsilon \rightarrow 0$.

In the current article, we prove a variety of results related to the control and homogenisation of system \eqref{general_B_M}. First, we consider the control system and show that the low frequencies and high frequencies are controllable to the desired state, using only boundary control. This is done by proving a suitable estimate for the adjoint system \eqref{eq_adjt}, known as \emph{observability estimate} that looks like
\[ \int_0^{T} |\langle \p_x w^\varepsilon(t,1), b\rangle |^2 \rd t \geqslant C \| (w_0^\varepsilon,w_1^\varepsilon) \|^2_{(H^1(0,1))^n \times (L^2(0,1))^n},\]
for some $C>0$. We start from the observability inequality and apply the \emph{Hilbert Uniqueness Method} (HUM) developed in \cite{lionj:ctrlstab_hum, JL2}. Together, these yield the minimal \(L^2\)-norm control, which in turn provides a uniform bound on the control, with respect to $\varepsilon$.

More recently, uniform controllability has also been studied in higher dimensions; see \cite{FZ}. In that work, however, the system lacks zeroth-order coupling. The low-frequency uniform controllability is obtained for frequencies below \(\mc{O}( \varepsilon^{ -2/3 }) \). For related homogenization problems in controllability of evolutionary equations, we refer to \cite{teb12, zuazua94, Fella19, don-nabil01}. However, for the middle region we do not have controllability, along the similar lines as in \cite{CC99}.

Finally, we prove that to control all initial data in $(L^2(0,1))^n \times (H^{-1} (0,1))^n$, in addition to the boundary control, we also need to put an additional feedback control. This part is an analogue of \cite{ped_per}, where the authors discussed this result for a single equation.

\subsection{Outline}
In Section \ref{sec_spec}, we discuss some spectral properties for the operator $\p_x(a^\varepsilon \p_x)$ that are crucial to understand the properties of $\mc{L}^\varepsilon$. Here, we also prove the main boundary controllability results concerning the low and high frequencies. In Section \ref{sec_uniform}, we show that the obtained controls are uniform in $\varepsilon$. In Section \ref{sec_feedback}, we show that all initial data in $(L^2(0,1))^n \times (H^{-1}(0,1))^n$ can be controlled by putting an additional feedback type (interior) control.

\subsection{Acknowledgement}
VKJ was supported by the National Board for Higher Mathematics (NBHM), Department of Atomic Energy, Government of India, under the NBHM Postdoctoral Fellowship grant number 0204/16(7)/2024/R\&D-II/6758.

\section{Spectral estimates and controllability} \label{sec_spec}
In order to study the eigenvalue problem given in \eqref{eq_evp}, we recall a result from \cite{castro_zuazua,lopez_zuazua}, concerning the spectral gap for the following eigenvalue problem 
\begin{equation*}
\begin{split}
-\partial_x( a^\varepsilon(x) \partial_x \varphi^\varepsilon) = \lambda^\varepsilon \varphi^\varepsilon, & \ \text{ in } (0,1),\\
\varphi_\varepsilon(0)=\varphi_\varepsilon(1)=0.&
\end{split}
\end{equation*} 
For each $\varepsilon>0,$  there exists a sequence of spectral pairs $\{(\lambda_k^\varepsilon, \varphi_k^\varepsilon)\}_{k\in \mathbb{N}}$ such that 
\[ 0<\lambda^\varepsilon_1<\lambda_2^\varepsilon<\cdots<\lambda_k^\varepsilon<\cdots \to \infty,\]
and $\{\varphi_k^\varepsilon\}_{k\in \mathbb{N}}$ forms an orthonormal basis for $L^2(0,1)$. Also, by comparing $\lambda_k^\varepsilon$ with the spectrum of the operator $-\partial_{xx},$ it can be shown that (see \cite[Section 2.1]{lopez_zuazua}) $\lambda_k^\varepsilon\sim k^2$. Now, we recall two key results on the spectral pairs from \cite{CC99}.
\begin{prop} \label{eigenfunestilow}
Assume that $a\in L^\infty(\mathbb{R})$ is a periodic function. Given $\delta > 0$, there exists a constant $C(\delta) > 0$ such that 
$$\sqrt{\lambda_{n+1}^\varepsilon} - \sqrt{\lambda_n^\varepsilon} \geqslant \frac{\pi}{\sqrt{\overline{a}}} - \delta,$$
for all $n$ and $\varepsilon$ with $\varepsilon n \leqslant C(\delta)$ and where $\bar{a} = \int_0^1 a(s/\varepsilon) \rd s $. Furthermore, there exist $C_1, C_2 > 0$ such that the following estimates hold for the eigenfunctions $\varphi_n^\varepsilon$:
$$C_1 \int_0^1 |(\varphi_n^\varepsilon)'(x)|^2 \leqslant |(\varphi_n^\varepsilon)'(1)|^2 \leqslant C_2 \int_0^1 |(\varphi_n^\varepsilon)'(x)|^2,$$
where $'$ denotes derivative with respect to $x$ variable.
\end{prop}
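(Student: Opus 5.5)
We follow the WKB/Bloch-wave strategy of Castro and Zuazua: reduce the problem to a one-dimensional ODE on each period cell and read the spectral gap from Bloch dispersion in the low-frequency regime. First rescale: setting $y = x/\varepsilon$, the eigenvalue problem becomes $-\partial_y(a(y)\partial_y \psi) = \varepsilon^2 \lambda \psi$ on $(0,1/\varepsilon)$ with Dirichlet conditions. For the periodic operator $-\partial_y(a\partial_y)$ on $\mathbb{R}$ we use Floquet--Bloch theory. Let $\Lambda = \varepsilon^2\lambda$, and let $\cos\xi(\Lambda) = \tfrac12\operatorname{tr} M(\Lambda)$, where $M(\Lambda)$ is the monodromy matrix over one period. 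Then $\xi$ is real-analytic and increasing on the first band $[0,\Lambda_1^+]$, with expansion $\xi(\Lambda) = \sqrt{\Lambda/a^0}\,(1+O(\Lambda))$, where $a^0$ is the harmonic mean (the $\bar a$ appearing in the statement). In the first band the Dirichlet eigenvalues on $(0,1/\varepsilon)$ are characterised, up to a bounded phase correction $\sigma(\Lambda)$ coming from the Dirichlet condition at the endpoints, by a quantization condition
\[
\xi(\Lambda)/\varepsilon + \sigma(\Lambda) = n\pi .
\]
This follows from a Prüfer angle argument on the ODE, and the correction $\sigma$ is smooth and bounded with bounded derivative.

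\textbf{Gap estimate.} Differentiate the quantization condition. With $\sqrt{\lambda} = \sqrt{\Lambda}/\varepsilon$ we get
\[
\pi = \xi(\Lambda_{n+1})/\varepsilon - \xi(\Lambda_n)/\varepsilon + O(\Lambda_{n+1}-\Lambda_n).
\]
By the mean value theorem, $\xi(\Lambda_{n+1})-\xi(\Lambda_n) = \tfrac{d\xi}{d\sqrt{\Lambda}}\,\varepsilon\bigl(\sqrt{\lambda_{n+1}}-\sqrt{\lambda_n}\bigr)$, and $\tfrac{d\xi}{d\sqrt{\Lambda}} = 1/\sqrt{a^0} + O(\Lambda)$. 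The error term is $O(\Lambda\,\delta\Lambda) = O(\varepsilon^2 n \cdot \varepsilon^2 n)$, which is small. Since $\Lambda_n \sim (\varepsilon n)^2$, the condition $\varepsilon n \le C(\delta)$ forces $\Lambda$ to be small. This gives
\[
\sqrt{\lambda_{n+1}}-\sqrt{\lambda_n} \ge \frac{\pi}{\sqrt{a^0}\,\bigl(1+O(C(\delta)^2)\bigr)} \ge \frac{\pi}{\sqrt{\bar a}} - \delta
\]
once $C(\delta)$ is chosen small. The main obstacle is making the phase correction $\sigma$ rigorous and uniform when $a$ is only $L^\infty$. I would first try a direct Prüfer transformation $a\psi' = r\cos\omega$, $\psi = r\sin\omega$, and compare $\omega$ with its cell average via periodicity, bounding the deviation uniformly in $\Lambda$ small.

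\textbf{Boundary trace estimate.} I would use a multiplier argument. Multiply the equation by $x\,a^\varepsilon(\varphi_n^\varepsilon)'$ and integrate; for $a \in W^{1,\infty}$ this gives
\[
\tfrac12 a^\varepsilon(1)^2|\varphi'(1)|^2 = \tfrac12\int_0^1 (a^\varepsilon\varphi')^2 + \tfrac{\lambda}{2}\int_0^1 a^\varepsilon\varphi^2 - \tfrac{\lambda}{2}\int_0^1 x (a^\varepsilon)'\varphi^2 .
\]
The last term contains $(a^\varepsilon)' = O(1/\varepsilon)$, so it is not directly controlled. Instead I would use the rescaled Prüfer variables: the amplitude $r$ is nearly constant across cells, which yields $|a\varphi'(1)|^2 \sim r(1)^2 \sim \int_0^1 a|\varphi'|^2 + \lambda\int_0^1\varphi^2$. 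Since $\lambda\int_0^1\varphi^2 = \int_0^1 a|\varphi'|^2$ and $a$ is bounded above and below, both bounds follow with constants independent of $n$ and $\varepsilon$.
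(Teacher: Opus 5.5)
The paper gives no proof of this proposition; it is quoted from \cite{CC99}. Your Floquet--Bloch quantization is a reasonable route to a low-frequency gap. When $1/\varepsilon=N\in\mathbb{N}$, the identity $(M^N)_{12}=\frac{\sin N\xi}{\sin\xi}M_{12}$ even gives the exact condition $\xi(\varepsilon^2\lambda_k^\varepsilon)=k\pi\varepsilon$, so no correction $\sigma$ is needed. However, the gap estimate does not end where you claim. From your own expansion $\frac{d\xi}{d\sqrt{\Lambda}}=\frac{1}{\sqrt{a^0}}+O(\Lambda)$ and $\xi(\Lambda_{n+1})-\xi(\Lambda_n)=\pi\varepsilon(1+o(1))$, you get $\frac{1}{\sqrt{a^0}}\bigl(\sqrt{\lambda_{n+1}^\varepsilon}-\sqrt{\lambda_n^\varepsilon}\bigr)\approx\pi$. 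That is a gap close to $\pi\sqrt{a^0}$, not $\pi/\sqrt{a^0}$. Moreover, $a^0=\bigl(\int_0^1 1/a\bigr)^{-1}$ is the harmonic mean, whereas the statement defines $\bar a=\int_0^1 a(s/\varepsilon)\,\rd s$, the arithmetic mean. Your final display is thus obtained by an inversion plus a change of definition.

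No argument can produce that display for this operator. If $a\equiv c$, then $\lambda_k^\varepsilon=c\pi^2k^2$ and every gap equals $\pi\sqrt{c}$. This is smaller than $\pi/\sqrt{c}-\delta$ whenever $c<1$ and $\delta<\pi(1/\sqrt{c}-\sqrt{c})$. Equivalently, replacing $a$ by $ca$ multiplies the gaps by $\sqrt c$ but the claimed bound by $1/\sqrt c$. The bound $\pi/\sqrt{\bar\rho}-\delta$ with an arithmetic mean is the one valid for the density problem $-\varphi''=\lambda\rho(x/\varepsilon)\varphi$ of \cite{CC99}. For $-\p_x(a^\varepsilon\p_x)$, what your argument proves once the slip is corrected is $\sqrt{\lambda_{n+1}^\varepsilon}-\sqrt{\lambda_n^\varepsilon}\geqslant\pi\sqrt{a^0}-\delta$ for $\varepsilon n\leqslant C(\delta)$. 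That is also all that Lemma \ref{lemma_beurling} uses, so you should state this version explicitly rather than force the stated constant.

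For the trace estimate, your sketch asserts exactly the point that needs proof. (Your multiplier identity also has a sign slip: its last term is $+\frac{\lambda}{2}\int_0^1 x(a^\varepsilon)'\varphi^2$, though you rightly abandon it.) In the unscaled Pr\"ufer variables $a\psi'=r\cos\omega$, $\psi=r\sin\omega$, the amplitude is far from constant even for $a\equiv1$, where $r^2=\sin^2(\sqrt{\Lambda}y)+\Lambda\cos^2(\sqrt{\Lambda}y)$. One must instead use $\varphi=r\sin\omega$, $a^\varepsilon\varphi'=\sqrt{\lambda a^0}\,r\cos\omega$, which gives $(\log r)'=\frac{\sqrt{\lambda}}{2}\sin 2\omega\,\bigl(\sqrt{a^0}/a^\varepsilon-1/\sqrt{a^0}\bigr)$. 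The bracket has zero mean on each cell, but comparing with the cell average only bounds the change of $\log r$ by $O(\varepsilon^2\lambda)$ per cell. Over the $1/\varepsilon$ cells this is $O(\varepsilon\lambda)\sim\varepsilon n^2$, which is unbounded on the range $\varepsilon n\leqslant C$, so uniform constants require cancellation between cells.

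The Floquet structure you already use provides that cancellation. In these scaled variables the one-cell transfer matrix is $\exp\bigl(\sqrt{\Lambda}\,\bar G+O(\Lambda)\bigr)$, where $\bar G$ generates rotations at rate $1/\sqrt{a^0}$. Because $\Lambda$ lies inside the first band, this matrix equals $PR_\xi P^{-1}$ with $R_\xi$ a rotation and $P=I+O(\sqrt{\Lambda})$. Hence all its powers and their inverses have norm at most $1+O(\sqrt{\Lambda})$, so $r$ is comparable along $(0,1)$ uniformly for $\varepsilon n\leqslant C$. Since $\varphi(1)=0$, you then get $|a^\varepsilon(1)(\varphi_n^\varepsilon)'(1)|^2=\lambda_n^\varepsilon a^0r(1)^2\simeq\int_0^1 a^\varepsilon|(\varphi_n^\varepsilon)'|^2$. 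The two-sided trace bound should also be stated only on this range (and on the high-frequency one). For $\varepsilon n\approx1$ and suitable $a$, the eigenvalue falls in the first gap of the periodic operator and the eigenfunction decays exponentially towards $x=1$, so no uniform lower bound holds there.
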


\begin{prop}\label{eigenfunestihigh}
Let $a$ be a periodic function with $0 < a_m \leqslant a(x) \leqslant a_M < \infty$. Assume that $a \in W^{N+1, \infty}(\mathbb{R})$ for some $N \geqslant 1$. Given $\delta > 0$, there exists a constant $C > 0$ such that, if $n \geqslant C \varepsilon^{-1-1/N}$, we have
$$\sqrt{\lambda_{n+1}^\varepsilon} - \sqrt{\lambda_n^\varepsilon} \geqslant \frac{\pi}{\int_0^1 \sqrt{a(s/\varepsilon)} \, \mathrm{d}s} - \delta. $$
Furthermore, there exist $C_1, C_2>0$ such that the following estimates hold for the eigenfunctions $\varphi_n^\varepsilon$:
$$C_1 \int_0^1 |(\varphi_n^\varepsilon)'(x)|^2 \leqslant |(\varphi_n^\varepsilon)'(1)|^2 \leqslant C_2 \int_0^1 |(\varphi_n^\varepsilon)'(x)|^2. $$
\end{prop}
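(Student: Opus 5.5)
This result (Proposition~\ref{eigenfunestihigh}) is quoted from \cite{CC99}. To establish it, I would use a WKB / Liouville-type transformation in the high-frequency regime $n\geqslant C\varepsilon^{-1-1/N}$, where the wavelength $1/\sqrt{\lambda_n^\varepsilon}\sim 1/n$ is much smaller than the period $\varepsilon$. In that regime the oscillating coefficient is slowly varying on the scale of the wave.

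\textbf{Liouville transformation.} Introduce the travel-time variable
\[
y(x)=\int_0^x a^\varepsilon(s)^{-1/2}\,\rd s,
\]
and rescale the unknown by $\psi=(a^\varepsilon)^{1/4}\varphi$. This transforms $-\p_x(a^\varepsilon\p_x\varphi)=\lambda\varphi$ into a Schr\"odinger equation $-\psi''+q^\varepsilon(y)\psi=\lambda\psi$ on $(0,L_\varepsilon)$, where
\[
L_\varepsilon=\int_0^1 \sqrt{a(s/\varepsilon)}^{-1}\,\rd s .
\]
The correct normalisation for the statement is that the gap limit is $\pi/L$ with $L$ the total travel time; I would match the paper's constant accordingly. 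Two features of the potential matter. First, $q^\varepsilon$ contains $\varepsilon^{-2}$ times second derivatives of $a$, so $\|q^\varepsilon\|_\infty\sim\varepsilon^{-2}$. Second, $q^\varepsilon$ oscillates with period $\sim\varepsilon$.

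\textbf{Gap estimate.} A crude perturbation argument gives
\[
\sqrt{\lambda_n}=\frac{n\pi}{L}+O\!\left(\frac{\varepsilon^{-2}}{n}\right),
\]
which already yields the gap for $n\gg\varepsilon^{-2}$. To reach the sharper threshold $n\gtrsim\varepsilon^{-1-1/N}$, I would iterate the transformation $N$ times, i.e.\ carry out a higher-order WKB expansion. Each step uses one more derivative of $a$ (hence $a\in W^{N+1,\infty}$) and improves the error to
\[
O\!\left(\varepsilon^{-1}(\varepsilon n)^{-N}\right)\quad\text{relative to the leading term}.
\]
Equivalently, I would write the Pr\"ufer phase $\vartheta$, whose equation is
\[
\vartheta'=\sqrt{\lambda/a^\varepsilon}+(\text{oscillatory corrections}),
\]
and integrate the corrections by parts $N$ times. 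Each integration by parts gains a factor $(\varepsilon\sqrt{\lambda})^{-1}$. The eigenvalue condition $\vartheta(1)=n\pi$ then gives
\[
\sqrt{\lambda_n}\int_0^1 a^{-1/2}=n\pi+o(1),
\]
uniformly in $n$ and $\varepsilon$ in the stated range, and the gap bound $\geqslant \pi/L-\delta$ follows.

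\textbf{Boundary estimate.} For the eigenfunction estimate I would use the energy
\[
E(x)=a^\varepsilon|\varphi'|^2+\lambda|\varphi|^2 ,
\]
or more precisely its WKB-corrected version $\tilde E$, together with the multiplier identity
\[
\int_0^1 x\,(\cdots)\,\rd x .
\]
With $\varphi$ normalised, one has $\lambda=\int a|\varphi'|^2$. The Pr\"ufer amplitude $\rho$ satisfies
\[
(\log\rho)'=\text{oscillatory terms},
\]
and by the same $N$-fold integration by parts these integrate to $o(1)$. Hence $\rho(x)/\rho(1)$ stays between two constants independent of $n$ and $\varepsilon$. Since
\[
|\varphi'(1)|^2\simeq\rho(1)^2 \quad\text{and}\quad \int|\varphi'|^2\simeq\int\rho^2 ,
\]
the two-sided bound with $C_1,C_2$ follows.

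\textbf{Main obstacle.} The delicate step is the $N$-fold integration by parts in the Pr\"ufer equations. One has to show that the nonlinear terms, which involve $\sin 2\vartheta$ and $\cos 2\vartheta$, do not generate resonances. Specifically, the phase speed $\sqrt{\lambda/a}$ must never be commensurate with the $\varepsilon$-periodic forcing in a way that prevents averaging. This non-resonance is exactly what the condition $\varepsilon\sqrt{\lambda}\gg\varepsilon^{-1/N}$ guarantees, and keeping it under control is where most of the work lies.
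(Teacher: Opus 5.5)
The paper gives no proof of this proposition; it is imported from \cite{CC99}, so a Liouville/Pr\"ufer (WKB) argument of the kind you sketch is what would be needed. You are right about the constant. For $-(a^\varepsilon\varphi')'=\lambda\varphi$ the relevant length is $L_\varepsilon=\int_0^1 a(s/\varepsilon)^{-1/2}\,\rd s$. The quantity $\pi/\int_0^1\sqrt{a(s/\varepsilon)}\,\rd s$ is the constant for the density form $-\varphi''=\lambda a^\varepsilon\varphi$. It cannot be ``matched''; you must state which of the two problems you treat.

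The genuine gap is your conclusion that $\vartheta(1)=n\pi$ gives $\sqrt{\lambda_n}\,L_\varepsilon=n\pi+o(1)$ uniformly for $n\geqslant C\varepsilon^{-1-1/N}$. This is false as soon as $N\geqslant 2$. Set $\varphi=r\kappa^{-1/2}\sin\vartheta$ and $a^\varepsilon\varphi'=r\kappa^{1/2}\cos\vartheta$, with $\kappa=\sqrt{\lambda a^\varepsilon}$. Then $\vartheta'=k+h\sin 2\vartheta$ and $(\log r)'=-h\cos 2\vartheta$, where $k=\sqrt{\lambda/a^\varepsilon}$ and $h=(a^\varepsilon)'/(4a^\varepsilon)=O(\varepsilon^{-1})$. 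Your first integration by parts, i.e.\ $\tilde\vartheta=\vartheta+\frac{h}{2k}\cos 2\vartheta$, gives exactly
\[
\tilde\vartheta'=k-\frac{h^2}{k}\sin^2 2\vartheta+\left(\frac{h}{2k}\right)'\cos 2\vartheta .
\]
The new terms have gained the factor $(\varepsilon\sqrt\lambda)^{-1}$, as you say. However, $\sin^2 2\vartheta$ has mean $\frac12$, so part of these terms no longer oscillates and cannot be integrated by parts again. That part shifts $\vartheta(1)$ by
\[
-\frac{1}{32\sqrt\lambda}\int_0^1 ((a^\varepsilon)')^2(a^\varepsilon)^{-3/2}\,\rd x\approx -c_a\varepsilon^{-2}\lambda^{-1/2},
\qquad c_a=\frac{1}{32}\int_0^1 (a')^2a^{-3/2}\,\rd s,
\]
and $c_a>0$ for nonconstant $a$. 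Equivalently, your Liouville potential $q^\varepsilon$ has a positive mean of order $\varepsilon^{-2}$. For $n\sim\varepsilon^{-3/2}$, which is admissible when $N=2$, this shift is of order $\varepsilon^{-1/2}\to\infty$. Each later step likewise leaves secular terms of size $\varepsilon^{-1}(\varepsilon\sqrt\lambda)^{-m}$, $m\leqslant N$; only the oscillatory remainder is $O(\varepsilon^{-1}(\varepsilon\sqrt\lambda)^{-N})$. These secular terms, not commensurability, are the real obstacle. Integration by parts uses only $\vartheta'\gtrsim\sqrt\lambda$ and $\|h^{(j)}\|_\infty\lesssim\varepsilon^{-1-j}$ for $j\leqslant N$, and Bragg resonances are automatically contained in that remainder.

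The repair is to keep the secular terms rather than claim they are small. An $N$-step averaging (or $N$-term WKB) expansion yields, with $\omega_n=\sqrt{\lambda_n}$,
\[
\omega_nL_\varepsilon-\Psi_\varepsilon(\omega_n)=n\pi+O\left((\varepsilon\omega_n)^{-1}+\varepsilon^{-1}(\varepsilon\omega_n)^{-N}\right),
\]
where $\Psi_\varepsilon(\omega)=\sum_{m=1}^N\beta_m^\varepsilon\omega^{-m}$ and $\beta_m^\varepsilon=O(\varepsilon^{-1-m})$. Since $|\Psi_\varepsilon'(\omega)|\lesssim(\varepsilon\omega)^{-2}$, subtracting the relations for $n$ and $n+1$ and applying the mean value theorem gives the gap. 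So the gap follows from the smallness of $\Psi_\varepsilon'$, not of $\Psi_\varepsilon$; the eigenvalues themselves are not $n\pi/L_\varepsilon+o(1)$.

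For the amplitude $r$ (your $\rho$), your conclusion is correct, but again not because everything oscillates. The averaged drift of $\log r$ is a total derivative, a consequence of Wronskian conservation. For example, one has exactly
\[
\bigl(\log(r\sqrt{\vartheta'}\,(a^\varepsilon)^{1/4})\bigr)'=\left(h^2+\tfrac12 h'\right)\frac{\sin 2\vartheta}{\vartheta'}.
\]
This total-derivative structure has to be checked order by order before you can conclude that $r(x)/r(1)$ is bounded above and below uniformly.
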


To obtain the homogenized null controllability of system \eqref{general_B_M}, a crucial step is to derive a sequence of null controls $ \{f^\varepsilon\}_{\varepsilon \in (0,1)} \in L^2(0,T) $ with uniform bounds \( \|f^\varepsilon\|_{L^2(0,T)} \leq C \), where $ C$ is a generic constant independent of $\varepsilon$. We refer to this property as \emph{uniform null controllability}. As discussed in the introduction, to prove controllability of \eqref{general_B_M}, it is enough to obtain an observability estimate of the form
\[ \|(w^\varepsilon_0,w_1^\varepsilon)\|_{(H^1(0,1))^n\times(L^2(0,1))^n}^2 \leqslant C(T) \int_0^{T} |\langle \p_x w^\varepsilon(t,1), b\rangle |^2 \rd t, \]
for some positive constant $C(T)$, where $w^\varepsilon$ is the solution of the adjoint system \eqref{eq_adjt}. Henceforth, we will work to show an estimate of the above type.

In Proposition \ref{eigenfunestilow}, choose $\delta = \frac{\pi}{2\sqrt{\bar{\bar{a}}}}$ and let $D = C(\frac{\pi}{2\sqrt{\bar{\bar{a}}}})$. Then we define the set of low frequencies as the set $\{\varphi_k^\varepsilon: k\leq[D /\varepsilon]\}$ and the set of high frequencies as the set $\{\varphi_k^\varepsilon :k > [D/\varepsilon]\}$.Using the low and high frequency spaces introduced above, we define the corresponding low and high frequency subspaces required for the present analysis as follows:
\begin{equation} \label{eq_low_freq_def}
L_{[D/\varepsilon]}=\text{span} \{\Phi_{k,l}^\varepsilon=\varphi_k^\varepsilon\cdot v_l: k \leq [D/\varepsilon]\, , l=1,2,\dots,n\}.
\end{equation} 
The set of low frequency spectrum is given by 
\[\{\mu_{k,l}^\varepsilon := \lambda_k^\varepsilon + \theta_l:  k \leq [D/\varepsilon],  l \in \{1,2,\ldots, n\}\},\]
where $\lambda_k^\varepsilon$ and $\theta_l$ are given by equations \eqref{eq_evalue_L} and \eqref{eq_evalue_A}, respectively.
Let $\theta_1 < \theta_2 < \ldots < \theta_n$ be the arrangement of the eigenvalues of $A$. Due to assumption (H3), we can write the spectrum in an increasing order as follows
\[\{ \mu_{1,1}^\varepsilon,\mu_{1,2}^\varepsilon ,\ldots,\mu_{1n}^\varepsilon ,\mu_{2,1}^\varepsilon, \ldots, \mu_{2n,\ldots}^\varepsilon\}.\]
By re-indexing, the above set can be written as $\{\mu_k:k \leq n[D/\varepsilon]\}$. 
Accordingly, the above and \eqref{eq_low_freq_def}, imply that
$$L_{[D/\varepsilon]}=\{\Phi_k^\varepsilon: k\leq n[D/\varepsilon]\}.$$First, we will consider the analysis for the low-frequency space.
Let $\Pi_{L_{[D/\varepsilon]}}$ denote the usual projection operator on $(H_0^1(0,1))^n$.

In the spectral estimates, we will need to apply the Ingham-Beurling inequality provided in \cite[Theorem 1.5]{beurling}. Hence, we must show that the our spectrum satisfies the hypothesis of Ingham-Beurling inequality. For this purpose, we have the following result.

\begin{lemma} \label{lemma_beurling}
There exists $M \in \mb{N}$ such that 
\[ \sqrt{\mu_{k+M}^\varepsilon} - \sqrt{\mu_k^\varepsilon} > C, \quad \forall k \in \mb{N},\]
for some $C>0$.
\end{lemma}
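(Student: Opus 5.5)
We will take $M = n$. With (H3), the ordered sequence $\{\mu_k^\varepsilon\}$ is arranged in consecutive blocks $\{\mu^\varepsilon_{m,1}<\mu^\varepsilon_{m,2}<\dots<\mu^\varepsilon_{m,n}\}$, one block for each $m\in\mb{N}$. So if $k$ corresponds to $(m,j)$, then $k+n$ corresponds to $(m+1,j)$, and
\[
\sqrt{\mu^\varepsilon_{k+n}}-\sqrt{\mu^\varepsilon_k}=\sqrt{\lambda^\varepsilon_{m+1}+\theta_j}-\sqrt{\lambda^\varepsilon_m+\theta_j}
=\frac{\lambda^\varepsilon_{m+1}-\lambda^\varepsilon_m}{\sqrt{\lambda^\varepsilon_{m+1}+\theta_j}+\sqrt{\lambda^\varepsilon_m+\theta_j}} .
\]
We assume, as is implicit in taking square roots, that all $\mu^\varepsilon_{m,j}>0$. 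Otherwise we shift $A$ by a multiple of the identity, which does not affect controllability. The numerator is $(\sqrt{\lambda^\varepsilon_{m+1}}-\sqrt{\lambda^\varepsilon_m})(\sqrt{\lambda^\varepsilon_{m+1}}+\sqrt{\lambda^\varepsilon_m})$. The quotient
\[
\frac{\sqrt{\lambda^\varepsilon_{m+1}}+\sqrt{\lambda^\varepsilon_m}}{\sqrt{\lambda^\varepsilon_{m+1}+\theta_j}+\sqrt{\lambda^\varepsilon_m+\theta_j}}
\]
tends to $1$ as $m\to\infty$ and is bounded below by a positive constant for all $m$, since $\lambda^\varepsilon_m\ge\lambda^\varepsilon_1\ge \pi^2 a_m>0$ and $|\theta_j|$ is bounded. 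Hence it suffices to bound $\sqrt{\lambda^\varepsilon_{m+1}}-\sqrt{\lambda^\varepsilon_m}$ from below uniformly in $m$, with the relevant uniformity in $\varepsilon$.

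For the gap of $\sqrt{\lambda_m^\varepsilon}$ we split into the frequency ranges. On the low range $m\le [D/\varepsilon]$, Proposition \ref{eigenfunestilow} with the chosen $\delta$ gives a gap of at least $\pi/(2\sqrt{\bar a})$. On the high range $m\ge C\varepsilon^{-1-1/N}$, Proposition \ref{eigenfunestihigh} with $N=1$ (since $a\in W^{2,\infty}$) gives a gap of at least $\pi/\int_0^1\sqrt{a}-\delta$. For the intermediate range, and for a fixed $\varepsilon$ overall, we can use a cruder argument. The eigenvalues of a Sturm–Liouville problem are simple, and $\sqrt{\lambda^\varepsilon_m}$ obeys the WKB/Prüfer asymptotics $\sqrt{\lambda^\varepsilon_m}\sim m\pi/\int_0^1 (a^\varepsilon)^{-1/2}$. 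So for fixed $\varepsilon$ the gaps are eventually bounded below, and at finitely many indices they are positive. Their minimum is therefore a positive constant $C$. This is exactly the statement of the lemma, since $C$ is allowed to depend on $\varepsilon$ there.

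The main obstacle is this intermediate range. There, genuine uniformity in $\varepsilon$ fails, because of the spectral gaps/band phenomena noted in \cite{CC99}. If the later application needs uniformity, I would restrict the lemma to the low frequencies $k\le n[D/\varepsilon]$ or to the high frequencies separately, where the two propositions give $\varepsilon$-independent constants. Within each block, (H3) already ensures that the ordering is correct. The choice $M=n$ is what removes the need for any gap between $\mu_{m,j}$ and $\mu_{m,j+1}$, which may be small. Only the Ingham–Beurling requirement that any window of $M$ consecutive terms spreads out is used.
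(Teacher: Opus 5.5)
Your argument is correct and has the same skeleton as the paper's proof. You use (H3) to see that the ordered $\mu_k^\varepsilon$ come in blocks $\mu^\varepsilon_{m,1}<\dots<\mu^\varepsilon_{m,n}$, so an index shift by a multiple of $n$ moves $\lambda_m^\varepsilon$ and keeps $\theta_j$. You then reduce to the gap of $\sqrt{\lambda_m^\varepsilon}$ from Proposition \ref{eigenfunestilow}.

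Where you differ from the paper, your version is tighter:
\begin{itemize}
\item The paper takes $M=nN$ with $N$ large, calls the quotient $\mathcal{O}(1)$, and lets $Nc_1$ ``dominate''. But the quotient enters multiplicatively, so what is needed is a positive lower bound on it. You prove this bound from $\lambda_1^\varepsilon\ge\pi^2a_m$ and the boundedness of $\theta_j$, and with it $M=n$ already suffices.
\item The paper applies Proposition \ref{eigenfunestilow} to every $k$, although it only holds for $\varepsilon k\le C(\delta)$. So its argument really gives an $\varepsilon$-independent constant only on the low block $k\le n[D/\varepsilon]$. That is the only place the lemma is used (Proposition \ref{lowfreobs}).
\item Your split into low, intermediate and high ranges makes this explicit. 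For all $k\in\mathbb{N}$ you get the lemma with $C=C(\varepsilon)$, and separately you get uniform constants on the low range and on the high range (via Proposition \ref{eigenfunestihigh} with $N=1$). You also correctly flag that uniformity over the intermediate range should not be expected, which matches the paper's remark about the middle region.
\end{itemize}

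Two minor remarks:
\begin{itemize}
\item For fixed $\varepsilon$ the WKB step is unnecessary, and the relation $\sim$ alone would not give a gap anyway. Proposition \ref{eigenfunestihigh} already covers $m\ge C\varepsilon^{-2}$, so only finitely many simple eigenvalues remain.
\item Shifting $A$ by a multiple of the identity changes the $\mu_k^\varepsilon$ and the system. It is cleaner to state $\mu_k^\varepsilon>0$ as a standing assumption, as the paper tacitly does.
\end{itemize}
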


\begin{proof}
Using \eqref{eigenfunestilow}, we get that, if $N \in \mb{N}$, then
\begin{equation} \label{eq_beur1}
\sqrt{\lambda_{k+N}^\varepsilon} - \sqrt{\lambda_{k}^\varepsilon} = \sum_{i=1}^N \left( \sqrt{\lambda_{k+i}^\varepsilon} - \sqrt{\lambda_{k+i-1}^\varepsilon} \right) > N c_1,
\end{equation}
where $c_1 > 0$. Let us set $M=nN$, for some $N$ to be chosen later. Now, if $\mu_j^\varepsilon = \lambda_{k(j)}^\varepsilon + \theta_p$ for some $p \in \{1,2,\ldots,n\}$ , then 
\[ \mu_{j+M}^\varepsilon = \mu_{j+nN}^\varepsilon = \lambda_{{k(j)}+N}^\varepsilon + \theta_p,\]
where $k(j)$ depends on the index $j$ used in $\mu_j^\varepsilon$. We will write $k$ instead of $k(j)$ for convenience. Then, we have
\begin{align*}
\sqrt{\mu_{j+M}^\varepsilon} - \sqrt{\mu_{j}^\varepsilon} & = \sqrt{\lambda_{k+N}^\varepsilon + \theta_p} - \sqrt{\lambda_{k}^\varepsilon + \theta_p} \\
& = \frac{\left(\sqrt{\lambda_{k+N}^\varepsilon + \theta_p} - \sqrt{\lambda_{k}^\varepsilon + \theta_p}\right)}{\left(\sqrt{\lambda_{k+N}^\varepsilon} - \sqrt{\lambda_{k}^\varepsilon}\right)} \cdot \left( \sqrt{\lambda_{k+N}^\varepsilon} - \sqrt{\lambda_{k}^\varepsilon} \right) \\
& \simeq \mc{O}(1) \cdot \left( \sqrt{\lambda_{k+N}^\varepsilon} - \sqrt{\lambda_{k}^\varepsilon} \right)\\
& > \mc{O}(1) \cdot Nc_1.
\end{align*}
where we also used \eqref{eq_beur1} in the last step. Finally, choosing large enough $N$ so that the factor $Nc_1$ dominates the contribution coming from the $\mc{O}(1)$ term completes the proof of the lemma, with the choice $C = Nc_1$.
\end{proof}
Using the above gap condition, we will use the following version of the Ingham-Beurling inequality from \cite[Theorem 1.5]{beurling}.
\begin{lemma} \label{lemma_IB}
Let $\{\sigma_n\}_{n \in \mathbb{Z}}$ be a sequence of complex numbers. Suppose there exist $M \in \mathbb{N}$ and $\gamma > 0$ such that
\[ \sigma_{n+M} - \sigma_n \geq \gamma \quad \text{for all } n \in \mathbb{Z}. \]
Then, for any sequence $\{a_n\}_{n \in \mathbb{Z}} \in \ell^2(\mathbb{C})$ and for any $T > 0$, there holds
\[\int_0^T \left| \sum_{n \in \mathbb{Z}} a_n e^{i \sigma_n t} \right|^2 \rd t \asymp \sum_{n \in \mathbb{Z}} |a_n|^2, \]
where the constants implied by $\asymp$ depend only on $T$, $M$, and $\gamma$.
\end{lemma}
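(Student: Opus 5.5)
The plan is to deduce the two-sided estimate directly from the cited result \cite[Theorem 1.5]{beurling}, checking that its hypotheses coincide with those of Lemma \ref{lemma_IB}. I treat the two inequalities hidden in $\asymp$ separately, since they rest on different mechanisms. In the application, $\sigma_n$ will be real: it will be $\pm\sqrt{\mu^\varepsilon_k}$, separated in the sense of Lemma \ref{lemma_beurling}. I will therefore first reduce to real exponents, which is the setting of \cite{beurling}. The only property used is the relaxed gap $\sigma_{n+M}-\sigma_n\ge\gamma$. This implies that any interval of length $\gamma$ contains at most $M$ of the $\sigma_n$.

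\textbf{Upper (direct) inequality.} This part I would prove by hand, and it holds for every $T>0$. Split $\mathbb{Z}$ into the residue classes $r+M\mathbb{Z}$, $r=0,\dots,M-1$. Along each class the sequence $\{\sigma_{r+Mm}\}_m$ is uniformly separated with gap $\gamma$. The classical Ingham direct inequality (the version valid for all $T$, obtained via a compactly supported majorant kernel of width about $1/\gamma$) gives, for each class,
\[
\int_0^T\Bigl|\sum_m a_{r+Mm}e^{i\sigma_{r+Mm}t}\Bigr|^2\rd t\le C(T,\gamma)\sum_m|a_{r+Mm}|^2 .
\]
Combining the $M$ classes with Cauchy--Schwarz then costs a factor $M$. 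This gives the upper bound with constant $M\,C(T,\gamma)$.

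\textbf{Lower (inverse) inequality.} Here I would invoke \cite[Theorem 1.5]{beurling}, which is Beurling's generalization of Ingham's theorem to sequences satisfying the relaxed gap condition. For this I need to verify two things.
\begin{enumerate}
\item The sequence has finite upper density, $D^+\le M/\gamma$. This follows from the counting remark above.
\item The sequence is a finite union of uniformly separated sequences. This follows from the residue-class splitting.
\end{enumerate}
Under these conditions Beurling's theorem yields $\int_0^T|\sum a_n e^{i\sigma_n t}|^2\,\rd t\ge c\sum|a_n|^2$, with $c$ depending only on $T$, $M$, $\gamma$. One point must be handled along the way: near-coincident exponents inside a block of $M$ consecutive terms. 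Theorem 1.5 of \cite{beurling} absorbs these by using divided differences within each block, and I would record that this is exactly why the constants depend on $M$ as well as on $\gamma$.

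\textbf{Main obstacle.} The delicate step is the range of $T$ in the lower bound. Beurling-type inverse inequalities require the observation window to exceed the critical length $2\pi D^+$, which is controlled by $2\pi M/\gamma$. I therefore expect the argument to read Lemma \ref{lemma_IB} with the understanding that, in \cite{beurling}, the admissible $T$ is tied to $M$ and $\gamma$. In the paper's application I would ensure this by using the freedom in Lemma \ref{lemma_beurling} to enlarge $M$ while increasing $\gamma=Nc_1$ proportionally. The ratio $M/\gamma=n/c_1$ then stays fixed and comparable to the density of $\{\sqrt{\mu^\varepsilon_k}\}$. Consequently, the time threshold stays at the level $T>2\sqrt{\int_0^1 a}$ assumed in Theorem \ref{thm_control_m}. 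What must be tracked carefully is that none of the constants depends on $\varepsilon$. Since $M$, $\gamma$ and $T$ are $\varepsilon$-independent, this follows from the stated dependence of the constants only on $(T,M,\gamma)$.
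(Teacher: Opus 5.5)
Your upper bound is correct: each residue class $r+M\mathbb{Z}$ is $\gamma$-separated, the direct Ingham inequality holds on it for every $T>0$, and Cauchy--Schwarz costs a factor $M$. You are also right that the lower bound cannot hold for every $T>0$.

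The genuine gap is in the lower bound itself. Finite upper density together with ``finite union of separated sequences'' does not give $\int_0^T|\sum_n a_n e^{i\sigma_n t}|^2\,\rd t\ge c(T,M,\gamma)\sum_n|a_n|^2$ for any $T$. A lower Riesz bound forces $\inf_{n\neq m}|\sigma_n-\sigma_m|>0$, because $\|e^{i\sigma t}-e^{i\sigma' t}\|_{L^2(0,T)}\to0$ as $\sigma'\to\sigma$.

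Concretely, take $M=2$, $\gamma=1$, $\sigma_{2m}=m$ and $\sigma_{2m+1}=m+\delta$ (with $m\in\mathbb{Z}$, $0\le\delta<1$). Take $a_0=1$, $a_1=-1$ and all other $a_n=0$. The hypothesis holds and $\sum|a_n|^2=2$, yet
\[
\int_0^T\left|1-e^{i\delta t}\right|^2\rd t=\int_0^T4\sin^2(\delta t/2)\,\rd t\le\frac{\delta^2T^3}{3}.
\]
This tends to $0$ as $\delta\to0$ and vanishes identically for $\delta=0$.

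The weakened-gap theorem you invoke gives a two-sided estimate for a different norm: the coefficients with respect to divided differences of the exponentials within each group of close exponents. That change of norm cannot be absorbed into constants depending on $M$ and $\gamma$. In the example above, the divided-difference coefficient has size $\delta$, while $a_0,a_1$ have size $1$. So the lower half of Lemma \ref{lemma_IB}, as stated, is false, and no argument can deliver it. The lemma must be reformulated with the divided-difference norm, or with a separation constant entering the bounds, and with $T$ above the density threshold.

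This is exactly what breaks your last step. In the application, the exponents $\sqrt{\lambda_k^\varepsilon+\theta_l}$, $l=1,\dots,n$, form clusters. Their internal gaps $(\theta_{l+1}-\theta_l)/(\sqrt{\lambda_k^\varepsilon+\theta_{l+1}}+\sqrt{\lambda_k^\varepsilon+\theta_l})$ are of order $1/k$, hence of order $\varepsilon$ at the top $k\sim D/\varepsilon$ of the low-frequency range. So the fact that $M,\gamma,T$ are $\varepsilon$-independent does not yield $\varepsilon$-independent constants in the $H^1_0\times L^2$ norm. A correct argument gives either observability only in a weaker divided-difference norm, or constants that degenerate as $\varepsilon\to0$.

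Your time bookkeeping also does not close. With $M=nN$ and $\gamma\approx Nc_1$, the threshold $2\pi M/\gamma=2\pi n/c_1$ reflects the density of the $\sqrt{\mu_k^\varepsilon}$, which is $n$ times that of the $\sqrt{\lambda_k^\varepsilon}$. Even with $c_1\approx\pi/\sqrt{\bar a}$ it equals $2n\sqrt{\bar a}$. That is $n$ times the level $2\sqrt{\int_0^1 a}$ assumed in Theorem \ref{thm_control_m}, not the same level.
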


In the time interval $[0,T]$, we will prove the uniform null controllability for the low frequencies for system \eqref{general_B_M}. That is, we will show that, for each $\varepsilon>0$, there is a control  such that 
$$(\Pi _{L_{[D/\varepsilon]}} u^\varepsilon(T,\cdot),\Pi _{L_{[D/\varepsilon]}} \p_t u^\varepsilon(T,\cdot) )=0,$$
and the $L^2$-cost of the control is uniformly bounded with respect to $\varepsilon$.   
Now we will consider system \eqref{eq_adjt} with initial data $w^0, w^1 \in L_{[D/\varepsilon]}$.
By duality, it is enough to prove that the observability constant is independent of $\varepsilon$  for any $w^0,w^1 \in L_{[D/\varepsilon]}$. In particular we need to show the following result.
\begin{prop} \label{lowfreobs}
Let $w^\varepsilon =(w_1^\varepsilon,w_2^\varepsilon,\cdots, w_n^\varepsilon )^\text{tr}$ be the solution to the adjoint system \eqref{eq_adjt}  with given data $w^0,w^1  \in L_{[D/\varepsilon]}$. Then, given any $T>0,$ there exist positive constants $C_1(T), C_2(T)$ independent of $\varepsilon,$ such that 
\[C_1(T)\|(w^0,w^1)\|^2_{(H^1(0,1))^n\times(L^2(0,1))^n} \geqslant \int_0^{T} |\langle \p_x w^\varepsilon (t,1), b\rangle |^2 \rd t \geqslant C_2(T)\|(w^0,w^1)\|^2_{ (H^1(0,1))^n \times(L^2(0,1))^n}.\]
\end{prop}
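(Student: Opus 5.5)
Expand the solution of \eqref{eq_adjt} in the eigenbasis $\Phi^\varepsilon_{k,l}=\varphi_k^\varepsilon v_l$. For data $w^0,w^1\in L_{[D/\varepsilon]}$ write
\[
w^\varepsilon(t,x)=\sum_{k\le [D/\varepsilon]}\sum_{l=1}^n\Big(\alpha_{k,l}e^{i\sqrt{\mu^\varepsilon_{k,l}}\,t}+\beta_{k,l}e^{-i\sqrt{\mu^\varepsilon_{k,l}}\,t}\Big)\varphi_k^\varepsilon(x)v_l .
\]
We may assume $\mu^\varepsilon_{k,l}>0$ for all $k,l$. If some $\theta_l$ are very negative, only finitely many low modes are affected, uniformly in $\varepsilon$, since $\lambda_k^\varepsilon\sim k^2$ uniformly. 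These modes are handled by replacing $A$ with $A+cI$, which amounts to a compact perturbation treated separately; alternatively one can keep the finitely many non-oscillating exponentials, since Lemma \ref{lemma_IB} still applies to complex $\sigma$. Since $\{\varphi_k^\varepsilon\}$ is orthonormal in $L^2$ and $\{v_l\}$ is a basis of $\mathbb{R}^n$ (orthogonal by (H2)), the energy is equivalent, with constants depending only on the fixed normalization of the $v_l$, to
\[
\sum_{k,l}\mu^\varepsilon_{k,l}\big(|\alpha_{k,l}|^2+|\beta_{k,l}|^2\big)\simeq\sum_{k,l}\lambda_k^\varepsilon\big(|\alpha_{k,l}|^2+|\beta_{k,l}|^2\big),
\]
where we use $\mu^\varepsilon_{k,l}/\lambda_k^\varepsilon\in[c,C]$ uniformly. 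The boundary observation is then, using the normalization $\langle b,v_l\rangle=1$,
\[
\langle \p_x w^\varepsilon(t,1),b\rangle=\sum_{k,l}(\varphi_k^\varepsilon)'(1)\Big(\alpha_{k,l}e^{i\sqrt{\mu^\varepsilon_{k,l}}t}+\beta_{k,l}e^{-i\sqrt{\mu^\varepsilon_{k,l}}t}\Big).
\]

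Order the frequencies $\{\pm\sqrt{\mu^\varepsilon_{k,l}}\}$ increasingly, as in the re-indexing preceding Lemma \ref{lemma_beurling}, and index them by $\mathbb{Z}$. Lemma \ref{lemma_beurling} gives $\sqrt{\mu_{j+M}^\varepsilon}-\sqrt{\mu_j^\varepsilon}\ge\gamma$ with $M,\gamma$ independent of $\varepsilon$. The constant $c_1$ in \eqref{eq_beur1} comes from Proposition \ref{eigenfunestilow} with the fixed $\delta$, and this is valid exactly on the range $k\le [D/\varepsilon]$. The negative branch $-\sqrt{\mu}$ is also separated from the positive one by at least $2\sqrt{\mu_{\min}}>0$, so after possibly enlarging $M$ the whole two-sided sequence satisfies the hypothesis of Lemma \ref{lemma_IB}. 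Applying that lemma to the finite sum, with zero coefficients outside the low range, gives
\[
\int_0^T|\langle \p_x w^\varepsilon(t,1),b\rangle|^2\,\rd t\asymp\sum_{k,l}|(\varphi_k^\varepsilon)'(1)|^2\big(|\alpha_{k,l}|^2+|\beta_{k,l}|^2\big),
\]
with constants depending only on $T,M,\gamma$, hence not on $\varepsilon$.

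Finally, Proposition \ref{eigenfunestilow} gives $|(\varphi_k^\varepsilon)'(1)|^2\simeq\int_0^1|(\varphi_k^\varepsilon)'|^2$ uniformly. Testing \eqref{eq_evalue_L} against $\varphi_k^\varepsilon$ gives $\int_0^1 a^\varepsilon|(\varphi_k^\varepsilon)'|^2=\lambda_k^\varepsilon$, and together with \eqref{upper_lower_bound_1} this yields $\int_0^1|(\varphi_k^\varepsilon)'|^2\simeq\lambda_k^\varepsilon$ with constants depending only on $a_m,a_M$. Combining with the energy equivalence above yields both inequalities with $\varepsilon$-independent $C_1(T),C_2(T)$.

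The main obstacle is the $\varepsilon$-uniformity of Lemma \ref{lemma_IB}. Beurling's inequality requires $T$ larger than $2\pi$ times the upper density of the frequency sequence, i.e. roughly $T>2\pi\cdot n/\text{gap}$, rather than arbitrary $T$. The spectrum here has density about $n\sqrt{\bar a}/\pi$, since there are $n$ frequencies per gap $\pi/\sqrt{\bar a}$. I would therefore check that the stated time condition (or a suitable $T$) exceeds this density threshold, and make sure the constants depend only on $M,\gamma$ and the density bound, all of which are $\varepsilon$-independent on the low-frequency range. If the density condition fails for small $T$, the statement should be read with $T$ as in Theorem \ref{thm_control_m}.
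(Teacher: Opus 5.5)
Your argument follows the paper's proof step by step: the expansion in $\varphi_k^\varepsilon v_l$, the $M$-step gap of Lemma \ref{lemma_beurling}, then Lemma \ref{lemma_IB}, then $|(\varphi_k^\varepsilon)'(1)|^2\asymp\lambda_k^\varepsilon$. It fails at the same step the paper's does, namely the lower bound taken from Lemma \ref{lemma_IB}. Under only the weakened gap $\sigma_{j+M}-\sigma_j\ge\gamma$, the inverse inequality $\int_0^T|\sum_j a_je^{i\sigma_jt}|^2\,\rd t\gtrsim\sum_j|a_j|^2$ is false for every $T$. Two frequencies at distance $\eta$ with coefficients $1,-1$ give an integral $\le\eta^2T^3/3$. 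The genuine weakened-gap Ingham--Beurling theorem only controls coefficients with respect to divided differences of clustered exponentials. Here clustering is unavoidable when $n\ge2$:
\[
\Bigl|\sqrt{\mu^\varepsilon_{k,l}}-\sqrt{\mu^\varepsilon_{k,l'}}\Bigr|=\frac{|\theta_l-\theta_{l'}|}{\sqrt{\mu^\varepsilon_{k,l}}+\sqrt{\mu^\varepsilon_{k,l'}}}\lesssim\frac{1}{\sqrt{\lambda_k^\varepsilon}}\lesssim\frac{1}{k}.
\]
This is $O(\varepsilon)$ at the top of the range, $k\sim D/\varepsilon$.

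Concretely, take $k=[D/\varepsilon]$, $w^0=0$ and $w^1=\varphi_k^\varepsilon\bigl(\sqrt{\mu^\varepsilon_{k,1}}\,v_1-\sqrt{\mu^\varepsilon_{k,2}}\,v_2\bigr)$. Then $\langle\p_xw^\varepsilon(t,1),b\rangle=(\varphi_k^\varepsilon)'(1)\bigl(\sin(\sqrt{\mu^\varepsilon_{k,1}}\,t)-\sin(\sqrt{\mu^\varepsilon_{k,2}}\,t)\bigr)$. Using $|\sin x-\sin y|\le|x-y|$ and $|(\varphi_k^\varepsilon)'(1)|^2\lesssim\lambda_k^\varepsilon$, the observation is $\lesssim T^3$. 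On the other hand, $\|w^1\|^2_{(L^2)^n}=\mu^\varepsilon_{k,1}|v_1|^2+\mu^\varepsilon_{k,2}|v_2|^2\gtrsim\lambda_k^\varepsilon\gtrsim\varepsilon^{-2}$. So any admissible $C_2(T)$ is $O(T^3\varepsilon^2)$. The lower inequality therefore cannot hold uniformly in $\varepsilon$ on $L_{[D/\varepsilon]}$, and no argument can establish it as stated.

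Your worry about the time threshold is justified: the inverse inequality needs $T$ beyond a threshold fixed by $M$ and $\gamma$, so ``for any $T>0$'' in Lemma \ref{lemma_IB} is also wrong. It is secondary, though, because the example above defeats every $T$. The upper inequality is fine: the $M$-step gap splits the frequencies into $M$ uniformly separated subsequences, each of which obeys the direct Ingham inequality.

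What the divided-difference theorem actually gives is observability in a weaker norm, where the within-cluster differences are weighted by powers of $1/\sqrt{\lambda_k^\varepsilon}$. A uniform result would have to be stated in that norm, and hence for a smaller class of data. Alternatively, the low range would have to be cut at $k\le K$ with $K$ independent of $\varepsilon$, which gives up $[D/\varepsilon]\sim1/\varepsilon$.

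A smaller point: handling nonpositive $\mu^\varepsilon_{k,l}$ by a ``compact perturbation treated separately'' would itself need an $\varepsilon$-uniform argument. Also, $\mu=0$ produces $a+bt$ rather than an exponential.
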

\begin{proof}
While proving the above inequality, we fix $\varepsilon > 0$. Let $w^0, w^1 \in L_{[D/\varepsilon]}$. Then there exist sequences $\{a_k^\varepsilon\}_{|k|=1}^{n[D/\varepsilon]}$ and $\{b_k^\varepsilon\}_{|k|=1}^{n[D/\varepsilon]}$ in $\mathbb{R}$ such that
\[
w^0(x) = \sum_{|k|=1}^{n[D/\varepsilon]} a_k^\varepsilon \Phi^\varepsilon_k(x), 
\qquad
w^1(x) = \sum_{|k|=1}^{n[D/\varepsilon]} b_k^\varepsilon \Phi^\varepsilon_k(x).
\]
We express the solution $w^\varepsilon$ of the adjoint system \eqref{eq_adjt} as
\begin{equation}\label{eq_obs_hid_3}
w^\varepsilon(t,x) = \sum_{k=1}^{n[D/\varepsilon]} \sigma_k^\varepsilon(t)\, \Phi^\varepsilon_k(x),
\end{equation}
where the coefficients $\sigma_k^\varepsilon$ are to be determined.
Since $\Phi_k^\varepsilon$ is an eigenfunction of $-\mc{L}^\varepsilon + A$, substituting \eqref{eq_obs_hid_3} into the equation yields
\[
\ddot{\sigma}_k(t) + \mu_{k}^\varepsilon \sigma_k(t) = 0,
\]
with initial conditions
\[
\sigma_k(0) = a_k^\varepsilon, 
\qquad 
\sigma_k'(0) = b_k^\varepsilon,
\quad k = 1,2,\dots,n[D/\varepsilon].
\]
Solving this second-order ordinary differential equation, we obtain
\[
\sigma_k(t) = a_k^\varepsilon \cos(\sqrt{\mu_{k}^\varepsilon}\, t)
+ \frac{b_k^\varepsilon}{\sqrt{\mu_{k}^\varepsilon}} 
\sin(\sqrt{\mu_{k}^\varepsilon}\, t).
\]
Substituting the above into \eqref{eq_obs_hid_3}, we obtain
\begin{equation}\label{eq_obs_hid_4}
w^\varepsilon(t,x) = \sum_{k=1}^{n[D/\varepsilon]} \left( a_k^\varepsilon \cos(\sqrt{\mu_{k}^\varepsilon}\, t) + \frac{b_k^\varepsilon}{\sqrt{\mu_{k}^\varepsilon}}
\sin(\sqrt{\mu_{k}^\varepsilon}\, t) \right)\Phi^\varepsilon_k(x).
\end{equation}
Introducing the complex coefficients
\[ c_k^\varepsilon := \frac{1}{2} \left(a_k^\varepsilon - i \frac{b_k^\varepsilon}{\sqrt{\mu_{k}^\varepsilon}} \right) \quad c_{-k}^\varepsilon := \frac{1}{2} \left(a_k^\varepsilon + i \frac{b_k^\varepsilon}{\sqrt{\mu_{k}^\varepsilon}} \right), \]
and defining
\[ \rho_k := \sqrt{\mu_{k}^\varepsilon}, \qquad \rho_{-k} := -\sqrt{\mu_{k}^\varepsilon},\]
we can rewrite \eqref{eq_obs_hid_4} as
\[ w^\varepsilon(t,x) = \sum_{k=-n[D/\varepsilon]}^{n[D/\varepsilon]} c_k^\varepsilon e^{i \rho_k t} \Phi^\varepsilon_k(x).\]
This implies that
\begin{align*}
\int_0^T |\langle \p_x w^\varepsilon(t,1), b\rangle |^2 \rd t = \int_0^T \left| \sum_{k= -n[D/\varepsilon] }^{n[D/\varepsilon]} c_k^\varepsilon \langle (\Phi_k^\varepsilon )'(1), b\rangle e^{i \rho_k t} \right|^2 \rd t.
\end{align*}
Now, due to Lemma \ref{lemma_beurling}, we can apply Lemma \ref{lemma_IB}. Then, we get that there exist constants $C_1(T), C_2(T) > 0$ such that
\begin{align*}
C_1(T) \sum_{k=-n[D/\varepsilon] }^{n[D/\varepsilon]} 
|c_k^\varepsilon|^2 |\langle \Phi_k^\varepsilon)'(1),b\rangle|^2  & \le \int_0^T \left| \sum_{k=-n[D/\varepsilon]}^{n[D/\varepsilon]} c_k^\varepsilon |\langle (\Phi_k^\varepsilon)'(1),b\rangle |^2 e^{i \rho_k t} \right|^2 \rd t \numberthis \label{1628}\\
& \le C_2(T)\sum_{k=-n[D/\varepsilon]}^{n[D/\varepsilon]} 
|c_k^\varepsilon|^2 |\langle (\Phi_k^\varepsilon)'(1),b\rangle |^2.
\end{align*}
Now observe that
\[ \sum_{k=-n[D/\varepsilon]}^{n[D/\varepsilon]} |c_k^\varepsilon|^2 |\langle (\Phi_k^\varepsilon)'(1),b\rangle |^2 = \sum_{k=1}^{n[D/\varepsilon]}  |\langle (\Phi_k^\varepsilon)'(1),b\rangle |^2 \left( |a_k^\varepsilon|^2 + \frac{|b_k^\varepsilon|^2}{\mu_{k}^\varepsilon} \right). \]
Let us re-index the above summation as follows:
\[\sum_{k=1}^{n[D/\varepsilon]}  |\langle (\Phi_k^\varepsilon)'(1),b\rangle |^2 \left( |a_k^\varepsilon|^2 + \frac{|b_k^\varepsilon|^2}{\mu_{k}^\varepsilon} \right)=\sum_{k=1}^{[D/\varepsilon]}\sum_{l=1}^n|\langle (\Phi_{k,l}^\varepsilon)'(1),b\rangle |^2 \left( |a_{k,l}^\varepsilon|^2 + \frac{|b_{k,l}^\varepsilon|^2}{\mu_{k,l}^\varepsilon} \right).\]
Note that, \eqref{eq_evec_1} gives us that $\Phi^\varepsilon_{k,l} = \varphi_k^\varepsilon\cdot v_l$. Then 
\[\langle (\Phi_{k,l}^\varepsilon)'(1),b\rangle |^2 = |(\varphi_k)'(1)|^2| \langle b,v_l\rangle|=|(\varphi_k)'(1)|^2.\]
We recall the estimate \cite[Proposition 6.9]{CC99}
\[C_1 \lambda_k^\varepsilon \le |(\varphi_k^\varepsilon)'(1)|^2 \le C_2 \lambda_k^\varepsilon, \]
where $\lambda_k^\varepsilon = \int_0^1 a^\varepsilon(x) |(\varphi_k^\varepsilon)'(x)|^2 \rd x$.
Since $\mu_{k,l}^\varepsilon = \lambda_k^\varepsilon + \theta_l$, with $\theta_l$ bounded, we deduce that
\begin{align}\label{1303}
\begin{split}
\sum_{k=-n[D/\varepsilon]}^{n[D/\varepsilon]} |c_k^\varepsilon|^2 |\langle (\Phi_k^\varepsilon)'(1),b\rangle|^2 & \asymp \sum_{k=1}^{[D/\varepsilon]} \sum_{k=1}^l
\lambda_k^\varepsilon \left( |a_{k,l}^\varepsilon|^2 + \frac{|b_{k,l}^\varepsilon|^2}{\mu_{k,l}^\varepsilon} \right) \\
& \asymp \sum_{k=1}^{[D/\varepsilon]} \sum_{l=1}^n \left( \lambda_k^\varepsilon |a_{k,l}^\varepsilon|^2 + |b_{k,l}^\varepsilon|^2 \right) = \|(w^0, w^1) \|^2_{H_0^1(0,1) \times L^2(0,1)},
\end{split}
\end{align}
where we have used the following facts
\begin{align*}
(i)\ \|w^0\|_{(H_0^1(0,1))^n}^2 & = \sum_{n=1}^{n[D/\varepsilon]} |a^\varepsilon_k|^2\|\p_x \Phi^\varepsilon_k\|_{(L^2(0,1))^n}^2 = \sum_{n=1}^{[D/\varepsilon]} \sum_{k=1}^l |a^\varepsilon_{k,l}|^2 \|\p_x\Phi^\varepsilon_{k,l} \|_{(L^2(0,1))^n}^2\\
& = \sum_{n=1}^{[D/\varepsilon]} \sum_{k=1}^l|a^\varepsilon_{k,l}|^2 \|\p_x\varphi^\varepsilon_{k} \|_{L^2(0,1)}^2|v_l|_{\mathbb{R}^n}^2 \asymp \sum_{k=1}^{[D/\varepsilon]} \sum_{l=1}^n  \lambda_k^\varepsilon| a^\varepsilon_{k,l}|^2,\\
(ii)\ \|w^1\|_{(L^2(0,1))^n}^2 & = \sum_{k=1}^{n[D/\varepsilon]} |b^\varepsilon_{k}|^2= \sum_{k=1}^{[D/\varepsilon]} \sum_{l=1}^n|b^\varepsilon_{k,l}|^2 \asymp \sum_{k=1}^{[D/\varepsilon]} \sum_{l=1}^n \frac{\lambda_k}{\mu_{k,l}} |b^\varepsilon_{k,l}|^2.
\end{align*}
Here, the re-indexing follows according to the spectrum re-indexing as discussed before. Substituting \eqref{1303} into \eqref{1628} yields the desired estimate.
\end{proof}

Now we provide the analogous results for the high frequency region. For $\varepsilon>0,$  and  for some $N\geq 1$, define the following
\[H^{[D\varepsilon^{-1-1/N}]} = \text{span}\{\Phi_{k,i}^\varepsilon: k > [D\varepsilon^{-1-1/N}], i=1,2,\dots,n\}. \]

\begin{theorem} \label{thm_highfreq}
Let $a\in W^{N+1,\infty}(\mathbb{R})$ and $T>2 \ds \int_0^1\sqrt{a(x)} \rd x$. Then for any $(u^0,u^1) \in (L^2(0,1))^n\times (H^{-1}(0,1))^n$  there exists a control $f^\varepsilon\in L^2(0,T)$  such that the corresponding solution $u^\varepsilon$ verifies 
$$\left( \Pi_{ H^{[D\varepsilon^{-1-1/N}]}} u^\varepsilon(T,\cdot) , \Pi_{H^{ [D\varepsilon^{-1-1/N}] }} \p_t u^\varepsilon(T,\cdot)\right)=(0,0).$$
Moreover, there exists a constant $C := C(T) >0$, independent of $\varepsilon$, such that 
\begin{align*}
\| f^\varepsilon \|_{(L^2((0,T))^m} \leqslant C(T) \left\|\left( \Pi_{H^{[D\varepsilon^{-1-1/N}]}}{u}^0,\Pi_{H^{[D\varepsilon^{-1-1/N}]}}{u}^1\right) \right\|_{(L^2(0,1))^n\times (H^{-1}(0,1))^n},\\
\left\|\left(\Pi_{H^{[D\varepsilon^{-1-1/N}]}}{u}^\varepsilon,\Pi_{H^{[D\varepsilon^{-1-1/N}]}} \p_t u^\varepsilon \right) \right\|_{L^\infty(0,T;(L^2(0,1))^n\times (H^{-1}(0,1))^n)}\leq C(T)\|f^\varepsilon\|_{L^2(0,T)}.
\end{align*}
As $\varepsilon\to 0,$ the following convergences hold
\begin{align*}
\left(\Pi_{H^{[D\varepsilon^{-1-1/N}]}} u^\varepsilon, \Pi_{H^{ [D\varepsilon^{-1-1/N}]}} \p_t u^\varepsilon\right)&\rightharpoonup (0,0), \text{ weakly in } L^\infty(0,T;(L^2(0,T))^n \times (H^{-1}(0,1))^n),\\
f^\varepsilon & \rightharpoonup 0 , \quad \ \ \text{ weakly in }(L^2(0,T)).
\end{align*}
\end{theorem}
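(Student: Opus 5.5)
The plan is to repeat the low-frequency argument on the high-frequency subspace: first an observability estimate, uniform in $\varepsilon$, for data in $H^{[D\varepsilon^{-1-1/N}]}$, then HUM, then a weak-limit argument.

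\emph{Step 1: high-frequency observability.} Take $w^0,w^1\in H^{[D\varepsilon^{-1-1/N}]}$ and expand the solution of \eqref{eq_adjt} as in the proof of Proposition \ref{lowfreobs}:
\[w^\varepsilon(t,x)=\sum_{|k|>n[D\varepsilon^{-1-1/N}]} c_k^\varepsilon e^{i\rho_k t}\Phi_k^\varepsilon(x).\]
By Proposition \ref{eigenfunestihigh}, the gap of the square roots satisfies
\[\sqrt{\lambda^\varepsilon_{k+1}}-\sqrt{\lambda^\varepsilon_k}\ge \frac{\pi}{\int_0^1\sqrt{a}}-\delta\]
for these indices. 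Passing to $\mu^\varepsilon_{k,l}=\lambda^\varepsilon_k+\theta_l$ changes square roots by only $\mathcal{O}(1/\sqrt{\lambda_k^\varepsilon})$, which is negligible at high frequency. Hence the argument of Lemma \ref{lemma_beurling} gives $\rho_{k+n}-\rho_k\ge\gamma$. Here $\gamma$ is asymptotically the single-equation gap, since a block of $n$ consecutive $\rho$'s comes from one $\lambda_k$. This gap is what makes the time condition $T>2\int_0^1\sqrt a$ the relevant one.

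Lemma \ref{lemma_IB} applied with $M=n$ then yields two-sided bounds, with constants independent of $\varepsilon$, on
\[\int_0^T|\langle\partial_xw^\varepsilon(t,1),b\rangle|^2\,\mathrm{d}t \quad\text{by}\quad \sum|c_k|^2|(\varphi_k^\varepsilon)'(1)|^2.\]
This step uses the normalization $\langle b,v_l\rangle=1$. The boundary-trace estimate of Proposition \ref{eigenfunestihigh} gives $|(\varphi_k^\varepsilon)'(1)|^2\asymp\lambda_k^\varepsilon$, and, exactly as in \eqref{1303}, the sum is then equivalent to $\|(w^0,w^1)\|^2_{(H^1_0)^n\times(L^2)^n}$.

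\emph{Step 2: HUM.} Define the usual HUM functional on $H^{[D\varepsilon^{-1-1/N}]}\times H^{[D\varepsilon^{-1-1/N}]}$. It is coercive with a constant independent of $\varepsilon$, so it has a minimizer $(\hat w^0,\hat w^1)$. Set $f^\varepsilon=\langle\partial_x\hat w^\varepsilon(t,1),b\rangle$, up to the sign and the factor $a^\varepsilon(1)$, which is bounded above and below. Testing the controlled system against the adjoint solutions shows that the projection onto $H^{[D\varepsilon^{-1-1/N}]}$ vanishes at time $T$. The minimizer bound gives $\|f^\varepsilon\|\le C\|(\Pi u^0,\Pi u^1)\|$.

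The $L^\infty$ bound on the projected state comes from transposition. One uses the upper (direct) inequality in Step 1, uniform in $\varepsilon$, for all data in the high-frequency space, together with the fact that the projection commutes with the evolution because $-\mathcal{L}^\varepsilon+A$ is diagonal in the $\Phi_{k,l}^\varepsilon$ basis.

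\emph{Step 3: convergence, the main obstacle.} Uniform bounds give weakly convergent subsequences; the task is to identify both limits as $0$.

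For the state, take a fixed smooth test function $\psi$. Its projection onto $H^{[D\varepsilon^{-1-1/N}]}$ tends to $0$ strongly in $(H_0^1)^n$, because
\[\|\Pi_H\psi\|^2_{L^2}\lesssim \lambda^{-1}_{[D\varepsilon^{-1-1/N}]}\|\psi\|^2_{H^1}\to 0,\]
and similarly in the relevant norms. Since $\Pi_H$ is self-adjoint, $\langle \Pi_H u^\varepsilon,\psi\rangle=\langle u^\varepsilon,\Pi_H\psi\rangle$, and the projected state therefore converges weakly to $0$.

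For the control, $f^\varepsilon$ is the normal trace of an adjoint solution $\hat w^\varepsilon$ with data in the high-frequency space and uniformly bounded energy. I expect this to be the delicate point. My first attempt is to pair $f^\varepsilon$ with a test function $g\in C_c^\infty(0,T)$ and use the Fourier representation:
\[\int_0^T f^\varepsilon g\,\mathrm{d}t=\sum_k c_k (\varphi_k^\varepsilon)'(1)\,\hat g(\rho_k).\]
Because $\hat g$ decays rapidly, $|\hat g(\rho_k)|\le C_m\rho_k^{-m}$ and $|(\varphi_k^\varepsilon)'(1)|\lesssim\rho_k$. With $\sum|c_k|^2\rho_k^2$ bounded and $\rho_k\gtrsim\varepsilon^{-1-1/N}\to\infty$, Cauchy--Schwarz makes the sum $\mathcal{O}(\varepsilon^{m})$ for every $m$. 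Density of $C_c^\infty(0,T)$ in $L^2(0,T)$ plus the uniform $L^2$ bound then give $f^\varepsilon\rightharpoonup0$ for the full family, not just a subsequence.
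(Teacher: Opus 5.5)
Your three steps follow the paper's own route, which the paper only sketches: repeat Proposition~\ref{lowfreobs} on $H^{[D\varepsilon^{-1-1/N}]}$ with the gap and trace bounds of Proposition~\ref{eigenfunestihigh}, then HUM, then a limit argument. The gap is in Step~1, and you inherit it from the paper: Lemma~\ref{lemma_IB} is not a correct form of the weakened-gap Ingham--Beurling theorem. The hypothesis $\sigma_{k+M}-\sigma_k\ge\gamma$ lets points cluster. Once they cluster, no lower bound by $\sum|a_k|^2$ with constants depending only on $T,M,\gamma$ is possible. For instance, $\sigma_{2j}=j$, $\sigma_{2j+1}=j+\delta$ satisfies the hypothesis with $M=2$, $\gamma=1$. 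Yet $e^{i\sigma_0t}-e^{i\sigma_1t}$ has $\int_0^T|1-e^{i\delta t}|^2\,\rd t\le\delta^2T^3/3\to0$, while $\sum|a_k|^2=2$. The correct theorem (Baiocchi--Komornik--Loreti) controls such sums only through divided differences within clusters. It also needs an interval longer than $2\pi M/\gamma$. With $M=n$ and $\gamma$ the single-equation gap, this is about $2n\int_0^1\sqrt a$. So the block structure you point to lengthens the required time; it does not explain $T>2\int_0^1\sqrt a$.

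In your setting the clustering really happens. Write $\Pi_H:=\Pi_{H^{[D\varepsilon^{-1-1/N}]}}$ and $\rho_{k,l}:=\sqrt{\lambda_k^\varepsilon+\theta_l}$. Then $|\rho_{k,1}-\rho_{k,2}|=|\theta_1-\theta_2|/(\rho_{k,1}+\rho_{k,2})\asymp(\lambda_k^\varepsilon)^{-1/2}$. Take $k>[D\varepsilon^{-1-1/N}]$, $w^0=\Phi^\varepsilon_{k,1}-\Phi^\varepsilon_{k,2}$, $w^1=0$. Using $\langle b,v_l\rangle=1$, $|(\varphi_k^\varepsilon)'(1)|^2\lesssim\lambda_k^\varepsilon$ and $|\cos(\alpha t)-\cos(\beta t)|\le|\alpha-\beta|\,t$, you get
\begin{align*}
\int_0^T|\langle\p_x w^\varepsilon(t,1),b\rangle|^2\,\rd t&=|(\varphi_k^\varepsilon)'(1)|^2\int_0^T\bigl|\cos(\rho_{k,1}t)-\cos(\rho_{k,2}t)\bigr|^2\,\rd t\\
&\le|(\varphi_k^\varepsilon)'(1)|^2\,|\rho_{k,1}-\rho_{k,2}|^2\,\frac{T^3}{3}\le C\,T^3,
\end{align*}
whereas $\|w^0\|^2_{(H^1_0(0,1))^n}\asymp\lambda_k^\varepsilon$. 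Since $k$ is unbounded in the high-frequency space, observability fails for every fixed $\varepsilon$. Your HUM functional is therefore not coercive. By duality, the map $f\mapsto\Pi_H(u^\varepsilon(T),\p_tu^\varepsilon(T))$ (zero initial data) is not onto $\Pi_H[(L^2(0,1))^n\times(H^{-1}(0,1))^n]$. So for $n\ge2$ the statement cannot hold as written. For $A=\mathrm{diag}(0,1)$, $b=(1,1)^{\text{tr}}$ it is simply simultaneous control of two uncoupled equations by one boundary control. With $k=[D/\varepsilon]$, the same example also rules out $\varepsilon$-uniform constants in Proposition~\ref{lowfreobs}. The divided-difference theorem gives observability only in a norm weaker by up to $n-1$ derivatives, i.e.\ control of smoother data in longer time.

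Steps~2--3 are fine conditionally, and Step~3 does more work than needed. You have $\|\Pi_H\psi\|^2_{L^2}\lesssim(\lambda^\varepsilon_{[D\varepsilon^{-1-1/N}]})^{-1}\|\psi'\|^2_{L^2}$ for $\psi\in(H^1_0(0,1))^n$, and the analogous $\|\Pi_Hg\|_{H^{-1}}\lesssim(\lambda^\varepsilon_{[D\varepsilon^{-1-1/N}]})^{-1/2}\|g\|_{L^2}$. Together with density, these give $\|(\Pi_Hu^0,\Pi_Hu^1)\|_{L^2\times H^{-1}}\to0$. The two stated estimates would then already yield strong convergence to $0$ of $f^\varepsilon$ and of the projected state. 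Also, in your state argument write $\langle\Pi_Hu^\varepsilon,\psi\rangle=\langle\Pi_Hu^\varepsilon,\Pi_H\psi\rangle$, since only the projected state is uniformly bounded.
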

The proof of the above theorem follows the same strategy as in the low-frequency case (Proposition \ref{lowfreobs}). The key ingredient in the argument is the uniform estimate on eigenfunctions provided by Proposition \ref{eigenfunestihigh}. This allows one to reproduce the spectral decomposition argument and derive the corresponding observability inequality for initial data in $H^{D\varepsilon^{-1-1/N}}$, with a constant independent of $\varepsilon$. Once the uniform observability estimate is established, the uniform boundedness of the associated null controls with respect to $\varepsilon$ follows by duality. The remaining technical estimates can be obtained by adapting standard arguments, see, for instance, \cite{CC99}.

\section{Uniform bounds for the control and the projected state} \label{sec_uniform}
Now we will prove that the controls obtained in the previous section have uniform norm with respect to $\varepsilon$.
\begin{proof}[Proof of Theorem \ref{thm_control_m}]
Our aim is to prove null controllability of the projected solution using a duality argument. For this purpose, we consider the adjoint system
\begin{align}\label{1041}
\begin{cases}
\p_{tt} w - \mc{L}^\varepsilon w + A w = 0,  & (t,x)\in (0,T)\times (0,1), \\
w(t,0) = w(t,1) = 0, & t\in (0,T),\\
w(0,x) = w^0(x), \quad \p_t w(0,x) = w^1(x), & x\in (0,1),
\end{cases}
\end{align}
where $w^0, w^1 \in L_{[D/\varepsilon]}$. Multiplying \eqref{general_B_M} with $w$, then integrating on $[0,T] \times [0,1]$, and using integration by parts, we get
\begin{align*}
0 & = \int_0^T \int_0^1 \langle \p_{tt} u^\varepsilon - \mc{L}^\varepsilon u^\varepsilon + A u^\varepsilon , w\rangle \\
& = \int_0^T \int_0^1 \langle u^\varepsilon,  \p_{tt} w - \mc{L}^\varepsilon w + A w\rangle - \langle \p_t w(T,x),u^\varepsilon(T,x)\rangle_{H^{-1},H_0^1} +\langle  \p_t w(0,x), u^\varepsilon(0,x)\rangle_{H^{-1},H_0^1}  \\
& \qquad + \langle \p_t u^\varepsilon(T,x), w(T,x)\rangle - \langle \p_t u^\varepsilon(0,x), w(0,x)\rangle  \\
& \qquad + \int_0^T [a^\varepsilon(1) u^\varepsilon(t,1) w_x(t,1) - a^\varepsilon(0) u^\varepsilon(t,0) \p_x w(t,0)] \rd t \\
& \qquad - \int_0^T [ a^\varepsilon(1) \p_x u^\varepsilon(t,1) w(t,1) - a^\varepsilon(0) \p_x u^\varepsilon (t,0) w(t,0) ] \rd t,
\end{align*}
where $\langle \cdot, \cdot \rangle$ denotes the inner product in $\mathbb{R}^n$. Now, using the boundary and initial conditions given in \eqref{general_B_M} and \eqref{1041}, we get that
\begin{align*}
0 & = \langle u^0, w^1 \rangle_{L^2, L^2} - \langle u^1,w^0 \rangle_{H^{-1}, H^1_0}  + \int_0^T a^\varepsilon(1) \langle \p_x w(t,1), b f(t) \rangle_{\mb{R}^n} \rd t \\
& \qquad + \langle \p_t u^\varepsilon (T,x), w(T,x)\rangle_{H^{-1},H_0^1} - \langle u^\varepsilon(T,x),\p_t w(T,x) \rangle _{H^{-1},H_0^1}. \numberthis \label{eq_cont_cond_f}
\end{align*}
Taking inspiration from the above equation, we define the functional given by
\begin{align*}
J^\varepsilon & : (H_0^1(0,1))^n \times (L^2(0,1))^n \to \mb{R}, \\
J^\varepsilon(w^0, w^1) & = \langle u^0, w^1 \rangle_{L^2,L^2} - \langle u^1,w^0 \rangle_{H^{-1},H_0^1} + \frac{1}{2} \int_0^T a^\varepsilon(1) |\p_x w(t,1)|^2  \rd t,
\end{align*}
where $(w^0, w^1) \in L_{[D/\varepsilon]}$ and $w$ is the solution of \eqref{1041} associated with $(w^0,w^1)$. Let $(\hat{w}^\varepsilon_0, \hat{w}^\varepsilon_1)$ be a minimiser of $J^\varepsilon$, and let $\hat{w}^\varepsilon$ be the corresponding solution of \eqref{1041}. Using the first-order optimality condition by evaluating the derivative of $J^\varepsilon$ at the minimiser equal to zero, we get that 
\[\langle u^0, w^1 \rangle_{L^2(0,1) \times L^2(0,1)} - \langle u^1, w^0 \rangle_{H^{-1}(0,1)\times H_0^1(0,1)} + \int_0^T a^\varepsilon(1) \langle \p_x w(t,1), \p_x \hat{w}^\varepsilon(t,1) \rangle \rd t = 0,\]
for all $ (w^0, w^1) \in L_{[D/\varepsilon]}$.
Then let us define the control as follows
\begin{equation} \label{eq_control_def}
b f^\varepsilon(t) = \p_x \hat{w}^\varepsilon (t,1).
\end{equation}
Substituting this choice into \eqref{eq_cont_cond_f}, we obtain
\begin{align*}
\langle \p_t u^\varepsilon (T,x), w(T,x) \rangle_{H^{-1}(0,1)\times H_0^1(0,1)}
- \int_0^1 u^\varepsilon(T,x) \p_t w(T,x) \rd x = 0,
\end{align*}
holds for all $w^0,w^1 \in L_{[D/\varepsilon]}$. Now, time reversibility of wave equations implies that the above holds for all $w(T,\cdot),\p_t w(T,\cdot) \in L_{[D/\varepsilon]}$.
Then, we conclude that
\[ \Pi_{[D/\varepsilon]} u^\varepsilon(T,\cdot) = 0, \quad \Pi_{[D/\varepsilon]} \p_t u^\varepsilon (T,\cdot) = 0, \]
which proves \eqref{eq_proj_cont_low}, the null controllability of the projected solution.

Now, for each fixed $\varepsilon > 0$, the functional $J^\varepsilon$ attains its minimum at $(\hat{w}_0^\varepsilon, \hat{w}_1^\varepsilon)$. Hence,
\[J^\varepsilon(\hat{w}_0^\varepsilon, \hat{w}_1^\varepsilon) \leqslant J^\varepsilon(0,0) = 0,\]
which implies that
\[\frac{1}{2} \int_0^T a^\varepsilon(1) |\p_x \hat{w}^\varepsilon(t,1)|^2 \rd t \leqslant -\langle  u^0, \hat{w}_1^\varepsilon \rangle + \langle u^1, \hat{w}_0^\varepsilon \rangle. \]
Using duality, we obtain
\[ \frac{1}{2} \int_0^T a^\varepsilon(1) |\p_x \hat{w}^\varepsilon(t,1)|^2 \rd t
\le \|(u^0, u^1)\|_{L^2 \times H^{-1}} \|(\hat{w}_0^\varepsilon, \hat{w}_1^\varepsilon)\|_{H_0^1 \times L^2}. \]
By the observability inequality, there exists a constant $C(T) > 0$ such that
\[ \|(\hat{w}_0^\varepsilon, \hat{w}_1^\varepsilon)\|_{H_0^1 \times L^2}^2 \le C(T) \int_0^T a^\varepsilon(1)\, |\langle \p_x w^\varepsilon(t,1), b \rangle|^2 \rd t.\]
Combining the above inequalities yields
\[\int_0^T a^\varepsilon(1)\, |\p_x \hat{w}^\varepsilon(t,1)|^2 \rd t \le C(T)\, \|(u^0, u^1)\|_{L^2 \times H^{-1}}^2. \]
This proves that $\{\p_x \hat{w}^\varepsilon (t,1)\}_{\varepsilon \in (0,1)}$ is uniformly bounded in $L^2(0,T)$. Now, using \eqref{eq_control_def} shows that
\[ f^\varepsilon (t) = \frac{\langle \p_x \hat{w}^\varepsilon (t,1), b \rangle}{\| b \|^2}. \]
Thus, $\{ f^\varepsilon \}_{\varepsilon \in (0,1)}$ is uniformly bounded in $L^2(0,T)$.

\noindent\textbf{Claim I.}
Let $\eta \in C^1(0,T; L_{[D/\varepsilon]})$ and let $\psi^\varepsilon$ solve
\begin{align}\label{1210}
\begin{cases}
\p_{tt} \psi^\varepsilon - \mc{L}^\varepsilon \psi^\varepsilon + A \psi^\varepsilon = \eta, & (t,x)\in (0,T)\times (0,1),\\
\psi^\varepsilon(t,0) = \psi^\varepsilon(t,1) = 0, & t \in (0,T),\\
\psi^\varepsilon(0,x) = 0, \quad \p_t \psi^\varepsilon(0,x) = 0, & x \in (0,1).
\end{cases}
\end{align}
Then there exists a constant $C>0$, independent of $\varepsilon$, such that
\[ \|\p_x \psi^\varepsilon(1,\cdot)\|_{L^2(0,T)} \le C \|\eta\|_{L^1(0,T; L^2(0,1))}.\]
\medskip
\noindent\textit{Proof of the claim.}
Using Duhamel's principle, we write
\[\psi^\varepsilon(t,x) = \int_0^t \tilde{\psi}^\varepsilon(t-s,x,s) \rd s,\]
where $\tilde{\psi}^\varepsilon$ solves 
\[ \begin{cases}
\p_{tt} \tilde{\psi}^\varepsilon - \mc{L}^\varepsilon \tilde{\psi}^\varepsilon + A\tilde{\psi}^\varepsilon = 0, & (t,x,s)\in (0,T)\times (0,1)\times (0,T),\\
\tilde{\psi}^\varepsilon(t,0,s)=\tilde{\psi}^\varepsilon(t,1,s)=0, & (t,s)\in (0,T)\times (0,T),\\
\tilde{\psi}^\varepsilon(0,x,s) = 0, \quad \p_t \tilde{\psi}^\varepsilon(0,x,s) = \eta(s,x) & (x,s)\in (0,1)\times (0,T).
\end{cases} \]
By the observability inequality, there exists a constant $C>0$, independent of $\varepsilon$, such that
\begin{align}\label{1713}
\|\p_x \tilde{\psi}^\varepsilon(1,\cdot,s)\|_{L^2(0,T)} \le C \|\eta(s,\cdot)\|_{L^2(0,1)}.
\end{align}
Using Minkowski's integral inequality, we obtain
\begin{align*}
\|\p_x \psi^\varepsilon(\cdot,1)\|_{L^2(0,T)} &\le \int_0^T |\p_x \tilde{\psi}^\varepsilon (t-s,1,s) \|_{L^2(s,T)} \rd s \le \int_0^T \|\p_x \tilde{\psi}^\varepsilon(\cdot,1,s)\|_{L^2(0,T)} \rd s.
\end{align*}
Using \eqref{1713}, it follows that
\[\|\p_x \psi^\varepsilon(\cdot,1)\|_{L^2(0,T)} \le C \int_0^T |\eta(s,\cdot) \|_{L^2(0,1)} \rd s,\]
which yields
\[\|\p_x \psi^\varepsilon(1,\cdot)\|_{L^2(0,T)}\le C \|\eta\|_{L^1(0,T; L^2(0,1))}.\]
This completes the proof of the claim.

\medskip

\noindent\textbf{Claim II.}
Let $u^\varepsilon$ be defined by transposition:
\[ \int_0^T \int_0^1 u^\varepsilon \eta \rd x \rd t= - \int_0^T a^\varepsilon(1)\, f^\varepsilon(t)\, \p_x \psi^\varepsilon(t,1) \rd t, \quad \forall \eta \in C^1(0,T; L_{[D/\varepsilon]}).
\]
Then
\begin{align*}
\|\Pi_{L_{[D/\varepsilon]}} u^\varepsilon\|_{L^\infty(0,T; L^2(0,1))} & \le C \|f^\varepsilon\|_{L^2(0,T)}, \\
\|\Pi_{L_{[D/\varepsilon]}} \p_t u^\varepsilon\|_{L^\infty(0,T;H^{-1}(0,1))} & \le C \|f^\varepsilon\|_{L^2(0,T)}, \numberthis \label{1226}
\end{align*}
where $C$ is independent of $\varepsilon$.
\medskip

\noindent\textit{Proof of the claim.}
Using the transposition identity together with H\"older's inequality, we obtain
\[ \left| \int_0^T \int_0^1 u^\varepsilon \eta \rd x \rd t \right| \le C \|f^\varepsilon\|_{L^2(0,T)} \|\p_x \psi^\varepsilon(\cdot,1)\|_{L^2(0,T)}. \]
By Claim I, we have
\[\left| \int_0^T \int_0^1 u^\varepsilon \eta \rd x \rd t \right| \le C \|f^\varepsilon\|_{L^2(0,T)} \|\eta\|_{L^1(0,T; L^2(0,1))}. \]
Since the above identity holds for all $\eta \in L^2(0,T; L_{[D/\varepsilon]})$, it follows that $\Pi_{L_{[D/\varepsilon]}} u^\varepsilon$ defines a bounded linear functional on $L^1(0,T; L^2(0,1))$. Hence,
\[ \Pi_{L_{[D/\varepsilon]}} u^\varepsilon \in L^\infty(0,T; L^2(0,1)) \]
and
\[ \|\Pi_{L_{[D/\varepsilon]}} u^\varepsilon\|_{L^\infty(0,T; L^2(0,1))}
\le C \|f^\varepsilon\|_{L^2(0,T)}. \]
To derive \eqref{1226}, we formally replace $\eta$ by $\p_t \eta$ in \eqref{1210}. Then the transposition solution satisfies
\[ \int_0^T \int_0^1 u^\varepsilon \p_t \eta \rd x \rd t = - \int_0^T a^\varepsilon(1) f^\varepsilon(t) \p_x \psi^\varepsilon(t,1) \rd t.\]
Proceeding as above, we obtain
\[
\|\Pi_{L_{[D/\varepsilon]}} \p_t u^\varepsilon \|_{L^\infty(0,T;H^{-1}(0,1))}
\le C \|f^\varepsilon\|_{L^2(0,T)}.
\]
This completes the proof of the claim and also proof of Theorem \ref{thm_control_m}.
\end{proof}

\begin{proof}[Proof of Theorem \ref{thm_homogen_m}]
The proof follows the asymptotic strategy developed in~\cite{CC99}. From the uniform bounds established in Theorem \ref{thm_control_m}, we deduce that
\begin{equation*}
\|f^\varepsilon\|_{L^2(0,T)} \le C,
\end{equation*}
\begin{equation*}
\left\|\left(\Pi_{L_{[D/\varepsilon]}} u_\varepsilon, \Pi_{L_{[D/\varepsilon]}} \partial_t u_\varepsilon\right)\right\|_{L^\infty (0,T;(L^2(0,1))^n \times (H^{-1}(0,1))^n )} \le C,
\end{equation*}
where the constant $C > 0$ is independent of $\varepsilon$. Consequently, by standard compactness arguments, there exists a subsequence (still denoted by $\varepsilon$) as well as limit functions $f_0 \in L^2(0,T)$ and $u$ such that the following weak and weak-$*$ convergences hold as $\varepsilon \to 0$:
\begin{equation*}
a^\varepsilon(1)f^\varepsilon \rightharpoonup a^0f_0 \quad \text{weakly in } L^2(0,T),
\end{equation*}
\begin{equation*}
\left(\Pi_{L_{[D/\varepsilon]}} u_\varepsilon,\Pi_{L_{[D/\varepsilon]}} \partial_t u_\varepsilon\right) {\rightharpoonup} (u,\partial_t u) \quad \text{ weakly-* in } L^\infty\bigl(0,T; L^2(0,1)\times H^{-1}(0,1)\bigr).
\end{equation*}

To identify the limit system, we invoke the formulation for the method of transposition as in \cite[Theorem 4]{CC99}. Passing to the limit via standard homogenization techniques, we obtain the effective homogenized system.

Finally, by virtue of the \emph{Hilbert Uniqueness Method} (HUM), the null control minimizing the $L^2(0,T)$ cost functional for the homogenized problem is uniquely determined. Due to this uniqueness of the limit, a standard compactness argument implies that the entire sequence $\{ f^\varepsilon \}$ converges weakly to $f_0$, rather than merely a subsequence. This completes the proof.
\end{proof}

\section{Controllability of all initial data in \texorpdfstring{$L^2 \times H^{-1}$}{L² × H⁻¹} with added feedback control} \label{sec_feedback}

In this section, we prove that the system can be driven to rest from any initial datum in the space $L^2(0,1) \times H^{-1}(0,1)$ by means of an additional feedback-type control. The strategy is to perform a change of coordinates that transforms the original problem into an equivalent one with constant coefficients, for which the controllability problem is well understood.

We first recall the relation between two formulations of the wave equation with variable coefficients. Consider the equations
\begin{equation*} 
\partial_{tt} u^\varepsilon - \partial_x(a^\varepsilon(x) \partial_x u^\varepsilon) + A u^\varepsilon = 0,
\end{equation*}
and
\begin{equation} \label{eq_related_2}
\rho^\varepsilon \partial_{tt} u^\varepsilon - \partial_{xx} u^\varepsilon + \rho^\varepsilon A u^\varepsilon + \frac{\partial_x \rho^\varepsilon}{\rho^\varepsilon} \partial_x u^\varepsilon = 0.
\end{equation}
These two formulations are equivalent under the relation
\[
\rho^\varepsilon(x) a^\varepsilon(x) = 1.
\]
Indeed, using this identity, we obtain
\[
\partial_x a^\varepsilon = -\frac{\partial_x \rho^\varepsilon}{(\rho^\varepsilon)^2}, \qquad \partial_x \rho^\varepsilon = -\frac{\partial_x a^\varepsilon}{(a^\varepsilon)^2},
\]
and a direct computation yields
\begin{align*}
\partial_{tt} u^\varepsilon - \partial_x (a^\varepsilon(x) \partial_x u^\varepsilon) + A u^\varepsilon
&= \partial_{tt} u^\varepsilon - \partial_x \left( \frac{\partial_x u^\varepsilon}{\rho^\varepsilon} \right) + A u^\varepsilon \\
&= \partial_{tt} u^\varepsilon - \frac{\partial_{xx} u^\varepsilon}{\rho^\varepsilon}
+ \frac{\partial_x \rho^\varepsilon}{(\rho^\varepsilon)^2} \partial_x u^\varepsilon + A u^\varepsilon \\
&= \frac{1}{\rho^\varepsilon} \left( \rho^\varepsilon \partial_{tt} u^\varepsilon
- \partial_{xx} u^\varepsilon
+ \frac{\partial_x \rho^\varepsilon}{\rho^\varepsilon} \partial_x u^\varepsilon
+ \rho^\varepsilon A u^\varepsilon \right).
\end{align*}
Since the assumption \eqref{upper_lower_bound_1} ensures that $a^\varepsilon$ is bounded above and below by positive constants, the same holds for $\rho^\varepsilon$. Hence the two equations are equivalent in the sense of well-posedness and controllability. From now on, we shall work with the formulation \eqref{eq_related_2}. Our main result in this section is the following.

\begin{theorem} \label{thm_full_space}
Let $(u_0,u_1) \in L^2(0,1) \times H^{-1}(0,1)$ and let $\rho^\varepsilon \in W^{1,\infty}(\mathbb{R})$ be a $1$-periodic function satisfying
\[
0 < \rho_m \leqslant \rho^\varepsilon(x) \leqslant \rho_M < \infty, \quad \text{a.e. } x \in \mathbb{R}.
\]
Assume $T > 2 \displaystyle\int_0^1 \sqrt{\rho^\varepsilon(s)} \, \mathrm{d}s$ and let $\varepsilon > 0$ be sufficiently small. Then there exists a control function $f^\varepsilon \in L^2(0,T)$ such that the solution of
\begin{equation} \label{full_space}
\begin{cases}
2 \rho^\varepsilon \partial_{tt} u^\varepsilon - \partial_{xx} u^\varepsilon + 2 \rho^\varepsilon A u^\varepsilon = \rho^\varepsilon \partial_x \left( \dfrac{1}{\rho^\varepsilon} \partial_x u^\varepsilon \right), & (t,x)\in (0,T)\times(0,1), \\[1.2ex]
u^\varepsilon(t,0)=0,\qquad u^\varepsilon(t,1)= b f^\varepsilon(t), & t \in (0,T),\\[0.5ex]
u^\varepsilon(0,x)=u^0(x),\qquad \partial_t u^\varepsilon(0,x)=u^1(x), & x \in (0,1),
\end{cases}
\end{equation}
satisfies the terminal condition
\[u^\varepsilon(T,\cdot) = \partial_t u^\varepsilon(T,\cdot) = 0.\]
\end{theorem}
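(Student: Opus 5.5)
The plan is to notice that the particular form of \eqref{full_space} is designed so that a change of the spatial variable turns it into a coupled wave system with \emph{constant} coefficients. Expanding the right-hand side gives $\rho^\varepsilon\partial_x\big(\tfrac{1}{\rho^\varepsilon}\partial_x u^\varepsilon\big) = \partial_{xx}u^\varepsilon - \tfrac{\partial_x\rho^\varepsilon}{\rho^\varepsilon}\partial_x u^\varepsilon$. Dividing the equation by $2\rho^\varepsilon$ then yields
\[
\partial_{tt}u^\varepsilon - \frac{1}{\rho^\varepsilon}\partial_{xx}u^\varepsilon + \frac{\partial_x\rho^\varepsilon}{2(\rho^\varepsilon)^2}\partial_x u^\varepsilon + A u^\varepsilon = 0 .
\]
With $c^\varepsilon := (\rho^\varepsilon)^{-1/2}$ we have $c^\varepsilon\partial_x(c^\varepsilon\partial_x u) = (c^\varepsilon)^2 u_{xx} + c^\varepsilon c^\varepsilon_x u_x = \tfrac{1}{\rho^\varepsilon}u_{xx} - \tfrac{\partial_x\rho^\varepsilon}{2(\rho^\varepsilon)^2}u_x$. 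The equation is therefore exactly $\partial_{tt}u^\varepsilon - c^\varepsilon\partial_x(c^\varepsilon\partial_x u^\varepsilon) + Au^\varepsilon = 0$. Relative to \eqref{eq_related_2}, the extra first-order term is the interior feedback.

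Next I introduce the travel-time coordinate $y = Y^\varepsilon(x) := \int_0^x \sqrt{\rho^\varepsilon(s)}\,\rd s$, which maps $(0,1)$ onto $(0,L_\varepsilon)$ with $L_\varepsilon = \int_0^1\sqrt{\rho^\varepsilon}$. Since $c^\varepsilon\partial_x = \partial_y$, the function $v^\varepsilon(t,y) := u^\varepsilon(t,(Y^\varepsilon)^{-1}(y))$ solves
\[
\partial_{tt}v^\varepsilon - \partial_{yy}v^\varepsilon + Av^\varepsilon = 0 \ \text{ in } (0,T)\times(0,L_\varepsilon), \qquad v^\varepsilon(t,0)=0, \quad v^\varepsilon(t,L_\varepsilon)=bf^\varepsilon(t).
\]
Because $\sqrt{\rho_m}\le \partial_x Y^\varepsilon\le\sqrt{\rho_M}$, the map $Y^\varepsilon$ is bi-Lipschitz with constants independent of $\varepsilon$. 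Composition with it is therefore an isomorphism $L^2(0,1)\to L^2(0,L_\varepsilon)$ and $H_0^1(0,1)\to H_0^1(0,L_\varepsilon)$, and by duality also on $H^{-1}$ (using the Jacobian weight), again with uniform constants. So it suffices to null-control the constant-coefficient system from $(v^0,v^1)\in (L^2)^n\times(H^{-1})^n$, and then pull the control back. Note that $f^\varepsilon$ is unchanged by the change of variables.

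For the constant-coefficient system I apply HUM exactly as in the proof of Theorem \ref{thm_control_m}, now for all frequencies at once. The adjoint eigenpairs are $\mu_{k,j} = (k\pi/L_\varepsilon)^2+\theta_j$ with eigenfunctions $\sqrt{2/L_\varepsilon}\,\sin(k\pi y/L_\varepsilon)\,v_j$, normalised so that $\langle b,v_j\rangle=1$ by \cite[Lemma 2.1]{avd_ter}, which uses (H1)–(H2). The boundary traces then satisfy $|\langle \partial_y\Phi_{k,j}(L_\varepsilon),b\rangle|^2 \asymp k^2$. Hence the observability inequality
\[
\int_0^T|\langle\partial_y w(t,L_\varepsilon),b\rangle|^2\,\rd t \asymp \|(w^0,w^1)\|^2_{(H_0^1)^n\times(L^2)^n}
\]
reduces, as in Proposition \ref{lowfreobs}, to an Ingham-type inequality for the frequencies $\pm\sqrt{\mu_{k,j}}$. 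Observability then gives a minimiser of the HUM functional, and the corresponding control drives $v^\varepsilon$, and hence $u^\varepsilon$, to $(0,0)$ at time $T$.

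The main obstacle is getting this Ingham step with the sharp time $T>2L_\varepsilon$, rather than the unspecified time of Lemma \ref{lemma_IB}. I would proceed as follows.
\begin{enumerate}
\item Show that $\sqrt{\mu_{k,j}} = k\pi/L_\varepsilon + O(1/k)$, so that for $k$ large the $n$ frequencies attached to each $k$ cluster, and the clusters are separated by an asymptotic gap $\pi/L_\varepsilon$.
\item Use (H3), in its constant-coefficient analogue, to guarantee that all $\mu_{k,j}$ are distinct.
\item Invoke the Beurling version of Ingham's inequality for sequences with upper density $L_\varepsilon/\pi$. This holds for $T>2\pi\cdot L_\varepsilon/\pi = 2L_\varepsilon$, which is exactly the hypothesis.
\item Handle the finitely many clustered low modes by the divided-difference form of the inequality \cite{avd_ter}, with the cluster-separation guaranteed by distinctness.
\end{enumerate}
The assumption that $\varepsilon$ is small should only serve to keep $L_\varepsilon$ near $\int_0^1\sqrt{\rho}$ and the constants uniform.
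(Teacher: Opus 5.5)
Your reduction is the same as the paper's. You expand the right-hand side, recognise $\partial_{tt}u^\varepsilon - c^\varepsilon\partial_x(c^\varepsilon\partial_x u^\varepsilon)+Au^\varepsilon=0$ (the paper's \eqref{eq_sqrt_rho} multiplied out), and pass to $y=\int_0^x\sqrt{\rho^\varepsilon}$ to obtain $\partial_{tt}v-\partial_{yy}v+Av=0$ on $(0,L_\varepsilon)$ with $v(t,L_\varepsilon)=bf^\varepsilon(t)$. Your bi-Lipschitz remark on transporting $L^2$, $H^1_0$ and $H^{-1}$ is more careful than the paper. The paper then calls controllability of the constant-coefficient system in $(L^2)^n\times(H^{-1})^n$ ``classical'' and cites \cite{avd_ter}.

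\textbf{Where the argument breaks.} Your attempt to prove that step via HUM and Ingham fails, and it cannot be repaired. The clustering you noticed in step 1 is not confined to finitely many low modes. For \emph{every} large $k$, the frequencies $\omega_{k,j}=\sqrt{\mu_{k,j}}$ satisfy $\omega_{k,i}-\omega_{k,j}=(\theta_i-\theta_j)/(\omega_{k,i}+\omega_{k,j})\sim(\theta_i-\theta_j)L_\varepsilon/(2\pi k)\to0$. Consequently, the lower bound in your claimed equivalence $\int_0^T|\langle\partial_y w(t,L_\varepsilon),b\rangle|^2\,\rd t\asymp\|(w^0,w^1)\|^2_{(H^1_0)^n\times(L^2)^n}$ is false for $n\ge2$ and every $T$. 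To see this, take $\phi_k(y)=\sqrt{2/L_\varepsilon}\sin(k\pi y/L_\varepsilon)$ and $w=\phi_k(y)\bigl(\cos(\omega_{k,1}t)v_1-\cos(\omega_{k,2}t)v_2\bigr)$, which solves the adjoint system. Using $\langle b,v_j\rangle=1$, the observation is $\phi_k'(L_\varepsilon)\bigl(\cos(\omega_{k,1}t)-\cos(\omega_{k,2}t)\bigr)$. This is $O(1)$ in $L^2(0,T)$ uniformly in $k$: a factor $k$ from $\phi_k'(L_\varepsilon)$ times $|\omega_{k,1}-\omega_{k,2}|\,t\lesssim t/k$. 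Meanwhile $\|w^0\|^2_{(H^1_0)^n}\sim k^2$.

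\textbf{What a correct argument gives instead.} The divided-difference form of the Ingham--Beurling inequality controls intra-cluster differences only with weights $|\omega_{k,i}-\omega_{k,j}|^2\sim k^{-2}$, that is, with a loss of one derivative per divided-difference order. Its time condition must also use the density of \emph{all} frequencies, which is $nL_\varepsilon/\pi$ rather than $L_\varepsilon/\pi$. So it gives $T>2nL_\varepsilon$, not $T>2L_\varepsilon$. By HUM duality, a single scalar control $bf^\varepsilon$ therefore cannot steer every datum of $(L^2)^n\times(H^{-1})^n$ to rest for the constant-coefficient system. Since your change of variables is an isomorphism, the same is true for \eqref{full_space}. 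A correct clustered-Ingham argument yields, at best, controllability in a smaller space with graded regularity across the $v_j$-components, in time $T>2nL_\varepsilon$. Full-space controllability with $T>2L_\varepsilon$ needs $n=1$ or $n$ independent controls. The paper's own first remark says \cite{avd_ter} treats $n$ controls, so the paper's bare citation does not close this step either.

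A smaller point: the ``constant-coefficient analogue of (H3)'' you invoke to make the $\mu_{k,j}$ distinct is an extra hypothesis on $L_\varepsilon$ and the $\theta_j$. It does not follow from (H3) as stated for $-\partial_x(a^\varepsilon\partial_x)$.
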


\begin{proof}
Observe that the first equation in \eqref{full_space} can be rewritten as
\begin{align}
0 &= 2 \rho^\varepsilon \partial_{tt} u^\varepsilon - \partial_{xx} u^\varepsilon + 2 \rho^\varepsilon A u^\varepsilon
- \rho^\varepsilon \partial_x \left( \frac{1}{\rho^\varepsilon} \partial_x u^\varepsilon \right) \notag \\
&= 2 \rho^\varepsilon \partial_{tt} u^\varepsilon - \partial_{xx} u^\varepsilon + 2 \rho^\varepsilon A u^\varepsilon
+ \frac{\partial_x \rho^\varepsilon}{\rho^\varepsilon} \partial_x u^\varepsilon - \partial_{xx} u^\varepsilon \notag \\
&= 2 \rho^\varepsilon \partial_{tt} u^\varepsilon - 2 \partial_{xx} u^\varepsilon + 2 \rho^\varepsilon A u^\varepsilon
+ \frac{\partial_x \rho^\varepsilon}{\rho^\varepsilon} \partial_x u^\varepsilon. \label{eq_transf_2}
\end{align}
Dividing by $2$, we obtain
\[
\rho^\varepsilon \partial_{tt} u^\varepsilon - \partial_{xx} u^\varepsilon + \rho^\varepsilon A u^\varepsilon
+ \frac{1}{2\rho^\varepsilon} \partial_x \rho^\varepsilon \partial_x u^\varepsilon = 0,
\]
or equivalently,
\[
\rho^\varepsilon \partial_{tt} u^\varepsilon - \partial_{xx} u^\varepsilon + \rho^\varepsilon A u^\varepsilon
+ \frac{1}{\rho^\varepsilon} \partial_x \rho^\varepsilon \partial_x u^\varepsilon
= \frac{1}{2\rho^\varepsilon} \partial_x \rho^\varepsilon \partial_x u^\varepsilon.
\]
Thus, the term on the right-hand side plays the role of a feedback control in the original system.

We now claim that system \eqref{full_space} is equivalent to the system
\begin{equation} \label{eq_sqrt_rho}
\sqrt{\rho^\varepsilon} \partial_{tt} u^\varepsilon
- \partial_x \left( \frac{\partial_x u^\varepsilon}{\sqrt{\rho^\varepsilon}} \right)
+ \sqrt{\rho^\varepsilon} A u^\varepsilon = 0.
\end{equation}
Indeed, expanding the second term gives
\[
\sqrt{\rho^\varepsilon} \partial_{tt} u^\varepsilon
+ \frac{1}{2(\rho^\varepsilon)^{3/2}} \partial_x \rho^\varepsilon \partial_x u^\varepsilon
- \frac{\partial_{xx} u^\varepsilon}{\sqrt{\rho^\varepsilon}}
+ \sqrt{\rho^\varepsilon} A u^\varepsilon = 0.
\]
Multiplying by $2\sqrt{\rho^\varepsilon}$ yields precisely \eqref{eq_transf_2}, which proves the claim.

The advantage of \eqref{eq_sqrt_rho} is that it becomes constant-coefficient under a suitable change of variables. Define
\[
y(x) = \int_0^x \sqrt{\rho^\varepsilon(s)} \, \mathrm{d}s.
\]
Then $\partial_x y = \sqrt{\rho^\varepsilon(x)}$, and consequently
\[
\partial_x u^\varepsilon = \sqrt{\rho^\varepsilon} \, \partial_y u^\varepsilon,
\qquad
\partial_x \left( \frac{\partial_x u^\varepsilon}{\sqrt{\rho^\varepsilon}} \right)
= \partial_x(\partial_y u^\varepsilon)
= \sqrt{\rho^\varepsilon} \, \partial_{yy} u^\varepsilon.
\]
Substituting into \eqref{eq_sqrt_rho} and dividing by $\sqrt{\rho^\varepsilon}>0$, we obtain the constant-coefficient wave equation
\[
\partial_{tt} u^\varepsilon - \partial_{yy} u^\varepsilon + A u^\varepsilon = 0,
\]
posed on the spatial interval
\[
y \in \left(0, L_\varepsilon\right), \qquad L_\varepsilon := \int_0^1 \sqrt{\rho^\varepsilon(s)} \, \mathrm{d}s.
\]
Thus, the original control system \eqref{full_space} is transformed into the following equivalent problem:
\begin{equation} \label{full_space_2}
\begin{cases}
\partial_{tt} u^\varepsilon - \partial_{yy} u^\varepsilon + A u^\varepsilon = 0, & (t,y)\in (0,T)\times(0,L_\varepsilon), \\[0.5ex]
u^\varepsilon(t,0)=0,\qquad u^\varepsilon(t,L_\varepsilon)= b f^\varepsilon(t), & t \in (0,T),\\[0.5ex]
u^\varepsilon(0,y)=u^0(y),\qquad \partial_t u^\varepsilon(0,y)=u^1(y), & y \in (0,L_\varepsilon).
\end{cases}
\end{equation}
The controllability of this constant-coefficient system, with initial data lying in the space $L^2(0,L_\varepsilon) \times H^{-1}(0,L_\varepsilon)$, is classical; see, for instance, \cite{avd_ter}. Since the transformation $y \mapsto x$ is a diffeomorphism, the control $f^\varepsilon$ obtained for \eqref{full_space_2} yields the desired controllability for the orig inal system.
\end{proof}

Finally, we establish the convergence of the controls and the corresponding solutions as the parameter $\varepsilon \to 0$. Let
\[
\overline{\sqrt{\rho}} := \int_0^1 \sqrt{\rho(s)} \, \mathrm{d}s.
\]
We have the following compactness result.

\begin{theorem} \label{thm_full_space_hom}
Under the hypotheses of Theorem \ref{thm_full_space}, for every $\varepsilon>0$ there exists a control $f^\varepsilon \in L^2(0,T)$ for system \eqref{full_space} such that, up to a subsequence,
\begin{align*} 
u^\varepsilon &\longrightarrow u, && \text{strongly in } L^\infty(0,T;L^2(0,1)), \\
f^\varepsilon &\longrightarrow f, && \text{strongly in } L^2(0,T),
\end{align*}
where $(u,f)$ satisfies the homogenized system
\begin{equation*} 
\begin{cases}
(\overline{\sqrt{\rho}})^2 \partial_{tt} u - \partial_{xx} u + (\overline{\sqrt{\rho}})^2 A u = 0, & (t,x)\in (0,T)\times(0,1), \\[0.5ex]
u(t,0)=0,\qquad u(t,1)= b f(t), & t \in (0,T),\\[0.5ex]
u(0,x)=u_0(x),\qquad \partial_t u(0,x)=u_1(x), & x \in (0,1),
\end{cases}
\end{equation*}
together with the terminal conditions
\[u(T,\cdot) = \partial_t u(T,\cdot) = 0.\]
\end{theorem}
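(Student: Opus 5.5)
We work entirely in the transformed variables from the proof of Theorem \ref{thm_full_space}. Setting $y = y^\varepsilon(x) = \int_0^x \sqrt{\rho^\varepsilon(s)}\,\rd s$, system \eqref{full_space} becomes the constant-coefficient system \eqref{full_space_2} on $(0,L_\varepsilon)$. Since $\rho^\varepsilon(s)=\rho(s/\varepsilon)$ is periodic, we have $L_\varepsilon \to \overline{\sqrt{\rho}}$. Moreover, $y^\varepsilon(x) \to \overline{\sqrt{\rho}}\,x$ uniformly on $[0,1]$, because the averages of periodic functions converge (the error is $O(\varepsilon)$).

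\textbf{Rescaling to a fixed domain.} Set $z = y/L_\varepsilon \in (0,1)$ and $v^\varepsilon(t,z) = u^\varepsilon(t, L_\varepsilon z)$. Then $v^\varepsilon$ solves
\[
L_\varepsilon^2\,\partial_{tt} v^\varepsilon - \partial_{zz} v^\varepsilon + L_\varepsilon^2 A v^\varepsilon = 0, \qquad v^\varepsilon(t,0)=0, \qquad v^\varepsilon(t,1)=bf^\varepsilon(t).
\]
This is a constant-coefficient system whose coefficients converge to those of the homogenized system in the statement. The rescaled initial data are $v^\varepsilon_0 = u_0\circ (y^\varepsilon)^{-1}(L_\varepsilon\,\cdot)$ and $v^\varepsilon_1$, defined analogously. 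Because the change of variables is bi-Lipschitz with uniform constants, these data are uniformly bounded in $L^2\times H^{-1}$. Since the change of variables converges uniformly to the identity, they should converge strongly in $L^2\times H^{-1}$ to $(u_0,u_1)$. For $L^2$ this follows by density of continuous functions; for $H^{-1}$ one argues by duality against $H^1_0$ test functions composed with the change of variables.

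\textbf{Choice of control.} Take $f^\varepsilon$ to be the HUM control for the rescaled system. The time condition $T>2\overline{\sqrt{\rho}}$ gives $T > 2L_\varepsilon$ for small $\varepsilon$. Observability for $L_\varepsilon^2\partial_{tt}-\partial_{zz}+L_\varepsilon^2A$ with boundary observation $\langle \partial_z \varphi(t,1), b\rangle$ is classical via Ingham's inequality, using (H1)--(H2) and the gap assumption. The spectrum is $\sqrt{(k\pi/L_\varepsilon)^2+\theta_j}$, which depends continuously on $L_\varepsilon$. Hence the observability constants are uniform for $L_\varepsilon$ in a compact neighbourhood of $\overline{\sqrt{\rho}}$, giving $\|f^\varepsilon\|_{L^2(0,T)}\le C$.

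\textbf{Strong convergence of the controls.} This is the main obstacle. The HUM control is $f^\varepsilon = \langle \partial_z \hat\varphi^\varepsilon(\cdot,1),b\rangle$, where $\hat\varphi^\varepsilon$ is the adjoint solution with data the minimizer of $J^\varepsilon$. We write the HUM map as $\Lambda_\varepsilon$, that is, the Gramian operator. It is uniformly coercive and depends continuously in operator norm on $L_\varepsilon$. This follows because, in the explicit Fourier representation, the operator is a $\varepsilon$-independent kernel evaluated at the frequencies $\sqrt{(k\pi/L_\varepsilon)^2+\theta_j}$, and these converge uniformly after rescaling.

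Hence $\Lambda_\varepsilon^{-1}\to\Lambda_0^{-1}$ in norm. Combined with strong convergence of the data, this gives strong convergence of the minimizers $\hat\varphi^\varepsilon_0$. By the uniform direct (hidden regularity) inequality, it then gives strong convergence $f^\varepsilon\to f$ in $L^2(0,T)$. If this norm-continuity is delicate, I would instead argue with $\Gamma$-convergence of the functionals $J^\varepsilon$: uniform coercivity plus convergence of the quadratic part yields convergence of the minimizers and of the norms, and weak convergence together with convergence of the norms gives strong convergence.

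\textbf{Convergence of the states.} Continuous dependence of the transposition solution on the data, the control and the coefficients gives $v^\varepsilon\to v$ in $C([0,T];L^2(0,1))$. The limit $v$ solves the homogenized system and satisfies $v(T)=\partial_tv(T)=0$, since each $v^\varepsilon$ does and the convergence also holds for $\partial_t$ in $H^{-1}$. Finally, $u^\varepsilon(t,x)=v^\varepsilon(t,y^\varepsilon(x)/L_\varepsilon)$, and $y^\varepsilon(x)/L_\varepsilon \to x$ uniformly. Composing a strongly convergent $L^2$ sequence with uniformly bi-Lipschitz maps converging uniformly to the identity preserves strong $L^2$ convergence, again by density. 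This yields $u^\varepsilon\to u=v$ in $L^\infty(0,T;L^2(0,1))$.
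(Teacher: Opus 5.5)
The paper contains no proof of this theorem; the source ends with its statement. I have therefore judged your argument on its own. It has two genuine gaps: the uniform observability step, and the treatment of the $H^{-1}$ data.

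\textbf{Observability.} Your uniform observability claim, ``classical via Ingham using (H1)--(H2) and the gap assumption'', fails for $n\ge 2$.

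Assumption (H3) separates the eigenvalues $\mu_{k,j}$, not the frequencies $\omega_{k,j}=\sqrt{(k\pi/L_\varepsilon)^2+\theta_j}$. These frequencies come in clusters of $n$ points, with $|\omega_{k,i}-\omega_{k,j}|=|\theta_i-\theta_j|/(\omega_{k,i}+\omega_{k,j})=O(1/k)$. So Ingham's inequality does not apply. The Beurling-type inequalities available for such sequences are two-sided only in terms of divided differences, not in terms of $\sum|a_n|^2$.

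In fact the inequality you need is false. Normalise $\langle b,v_j\rangle=1$ and take the adjoint solution
\[
\varphi(t,z)=\sin(k\pi z)\bigl(\cos(\omega_{k,1}t)\,v_1-\cos(\omega_{k,2}t)\,v_2\bigr).
\]
Its observation satisfies $|\langle\partial_z\varphi(t,1),b\rangle|=k\pi|\cos(\omega_{k,1}t)-\cos(\omega_{k,2}t)|\le k\pi|\omega_{k,1}-\omega_{k,2}|\,t\le Ct$, so it is $O(1)$ in $L^2(0,T)$. Meanwhile $\|\varphi(0,\cdot)\|_{(H^1_0)^n}\sim k$.

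Consequences:
\begin{itemize}
\item $\Lambda_\varepsilon$ is not coercive on $(H^1_0)^n\times(L^2)^n$, and the HUM minimiser need not exist for general data.
\item By duality, one scalar control cannot null-control all of $(L^2)^n\times(H^{-1})^n$. This is the same ``classical'' claim used in the proof of Theorem \ref{thm_full_space}, so you cannot import it from there.
\item The bound $\|f^\varepsilon\|\le C$ and everything after it collapse.
\end{itemize}
Your scheme is valid for $n=1$: there the gap tends to $\pi/L_\varepsilon$, and Ingham works for $T>2\overline{\sqrt\rho}$. For $n\ge 2$ it could only work on a smaller, Kalman-adapted data space, where a divided-difference inequality gives coercivity uniformly for $L$ near $\overline{\sqrt\rho}$.

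\textbf{The $H^{-1}$ data.} Uniform bi-Lipschitz control suffices for $u_0\in L^2$, but not for $u_1$.

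Set $\chi_\varepsilon:=(y^\varepsilon)^{-1}(L_\varepsilon\,\cdot)$. Duality gives
$\langle u_1\circ\chi_\varepsilon,\psi\rangle=L_\varepsilon^{-1}\langle u_1,(\psi\circ\chi_\varepsilon^{-1})\sqrt{\rho^\varepsilon}\rangle$.
The oscillating Jacobian therefore enters the test function, and $\|(\sqrt{\rho^\varepsilon})'\|_{L^\infty}\sim\varepsilon^{-1}$. Hence $\|v_1^\varepsilon\|_{H^{-1}}\asymp\|\sqrt{\rho^\varepsilon}u_1\|_{H^{-1}}$.

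For non-constant $\rho$, multiplication by $\sqrt{\rho^\varepsilon}$ has norm of order $\varepsilon^{-1}$ on $H^{-1}$. Banach--Steinhaus then gives a fixed $u_1\in H^{-1}$ whose transformed data are unbounded as $\varepsilon\to0$. By the backward transposition estimate, every null control for such data is then unbounded too.

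Even bounded data need not converge to $u_1$. For $u_1=\delta_{x_0}$ you get $v_1^\varepsilon=L_\varepsilon^{-1}\sqrt{\rho(x_0/\varepsilon)}\,\delta_{\chi_\varepsilon^{-1}(x_0)}$. The prefactor oscillates, so subsequential limits start with velocity $c\,\delta_{x_0}$, and in general $c\neq 1$. You need $u_1\in L^2$, or an explicit hypothesis on $\sqrt{\rho^\varepsilon}u_1$ in $H^{-1}$.

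\textbf{A minor point.} $\Lambda_\varepsilon$ is not norm-continuous in $L_\varepsilon$. After rescaling time, changing $L_\varepsilon$ moves the observation window by $O(|L_\varepsilon-\overline{\sqrt\rho}|)$, and exponential sums can concentrate their mass on short windows.

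Strong convergence plus uniform coercivity is nevertheless enough. Writing $\Lambda_\varepsilon\hat w_\varepsilon=g_\varepsilon$ and $\Lambda\hat w=g$ for the limit problem, you have $\hat w_\varepsilon-\hat w=\Lambda_\varepsilon^{-1}\bigl[(g_\varepsilon-g)+(\Lambda-\Lambda_\varepsilon)\hat w\bigr]$. So your $\Gamma$-convergence fallback is the right route.

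With $n=1$ and $u_1\in L^2$, your remaining steps are sound: the rescaling, the continuity of transposition solutions, and pulling back in $L^2$.
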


\end{document}